\documentclass[english]{amsart}

\usepackage{amsmath}
\usepackage{amssymb}
\usepackage{amsthm}
\usepackage{amscd}
\usepackage{babel}
\usepackage[latin1]{inputenc}
\usepackage{graphicx}

\newcommand{\Z}[1]{\mathbb{Z}/#1\mathbb{Z}}

\newtheorem{Theorem}{Theorem}

\newtheorem{cor}{Corollary}
\newtheorem{prop}{Proposition}

\newtheorem{Lemma}{Lemma}

\theoremstyle{definition}
\newtheorem{defi}{Definition}
\newtheorem*{rem}{Remark}

\renewcommand{\subjclassname}{AMS \textup{2010} Mathematics Subject
Classification\ }

\author{Jos\'{e} Mar\'{i}a Grau}
\address{Departamento de Matemáticas, Universidad de Oviedo\\ Avda. Calvo Sotelo s/n, 33007 Oviedo, Spain}
\email{grau@uniovi.es}

\author{Celino Miguel}
\address{Instituto de Telecomunicaoes, Polo de Covilha}
\email{celino@ubi.pt}

\author{Antonio M. Oller-Marc\'{e}n}
\address{Centro Universitario de la Defensa de Zaragoza\\ Ctra. Huesca s/n, 50090 Zaragoza, Spain}
\email{oller@unizar.es}

\title{On the structure of quaternion rings over $\mathbb{Z}/n\mathbb{Z}$}

\begin{document}

\begin{abstract} In this paper we define $\left(\frac{a,b}{\Z{n}}\right)$, the quaternion rings over $\Z{n}$, and investigate their structure. It is proved that these rings are isomorphic to $\left(\frac{-1,-1}{\Z{n}}\right)$ if $ a \equiv b\equiv -1 \pmod{4}$ or to $\left(\frac{1,1}{\Z{n}}\right)$ otherwise. We also prove that the ring $\left(\frac{a,b}{\Z{n}}\right)$ is isomorphic to $\mathbb{M}_2(\Z{n})$ if and only if $n$ is odd and that all quaternion algebras defined over $\Z{n}$ are isomorphic if and only if $n \not \equiv 0 \pmod{4}$.

\end{abstract}

\maketitle
\subjclassname{11R52,16-99}

\keywords{Keywords: Quaternion algebra, $\Z{n}$, Structure}

\section{Introduction}

The origin of quaternions dates back to 1843, when Hamilton considered a $4-$dimensional vector space over $\mathbb{R}$ with basis $\{1,i,j,k\}$ and defined an associative product given by the now classical rules $i^2=j^2=-1$ and $ij=-ji=k$. These ``Hamilton quaternions'' turned out to be the only division algebra over $\mathbb{R}$ with dimension greater than 2.

This idea was later extended to define quaternion algebras over arbitrary fields. Thus, a quaternion algebra over an arbitrary field $F$ is just a $4-$dimensional central simple algebra over $F$. This definition leads to different presentations according to the characteristic of the field $F$. If $F$ is a field of characteristic not $2$ a quaternion algebra over $F$ is a $4-$dimensional algebra over $F$ with a basis $\{1,i,j,k\}$ such that $i^2=a$, $j^2 = b$ and $ij = -ji=k$ for some $a,b\in F\setminus\{0\}$. On the other hand, if $F$ is a field of characteristic 2, a quaternion algebra over $F$ is a $4-$dimensional algebra over $F$ with a basis $\{1,i,j,k\}$ such that $i^2 + i = a$, $j^2 = b$, and $ ji = (i + 1)j=k$ for some $a\in F$ and $b\in F\setminus\{0\}$. The structure of quaternion algebras over fields is well-known. Indeed, such an algebra is either a division ring or isomorphic to the matrix ring $\mathbb{M}_2(F)$.

Generalizations of the notion of quaternion algebra to other commutative base rings $R$ have been considered by Hahn \cite{ha}, Kanzaki \cite{ka}, Knus \cite{max}, Gross and Lucianovic \cite{GLU} and most recently by John Voight \cite{jv, jv3}.

 On the other hand, quaternions over finite rings have attracted significant attention since they have applications in coding theory \cite{oz2,oz1,codes}.

En este trabajo consideramos una generalización de los quaternion algebras over a commutative ring with identity $R$ en una dirección diferente, en la linea de la original (o genuina lo que te parezca) de Hamilton sin más restricciones que $i^2$ and $j^2$ sean unidades del anillo $R$;
  Esto se distancia de las definiciones modernas de quaternion algebras pues en el caso de característica 2 el algebra resultante es conmutativa y más generalmente en característica potencia de dos, aunque no es conmutativa, no es central; i.e. su centro contiene estrictamente a $R$.

In particular, we will define quaternion rings over commutative, associative, unital rings as follows.

\begin{defi}
\label{def}
Let $R$ be a commutative and associative ring with identity and let $H(R)$ denote the free $R$-module of rank $4$ with basis $\{1, i, j, k\}$. That is,
\begin{equation*}H(R)=\{x_0+x_1i+x_2j+x_3k\;:\;x_0, x_1, x_2, x_3\in R\}.\end{equation*}
Now, let $a,b\in R$ be units and define an associative multiplication in $H(R)$ according to the following rules:
\begin{align*}i^2&=a,\\ j^2&=b,\\ ij&=-ji=k\end{align*}
Thus, we obtain an associative, unital ring which is denoted by $\left(\frac{a,b}{R}\right)$ to which we will refer as a quaternion ring over $R$.
\end{defi}

\begin{rem}
If $a=b=-1$, the corresponding quaternion ring in called the ring of Hamilton quaternions over $R$ and it is denoted by $\mathbb{H}(R)$.
\end{rem}

The following concepts extend the classical ones to this general setting.

\begin{defi}\label{trn}
Let $z=x_0+x_1i+x_2j+x_3k\in\left(\frac{a,b}{R}\right)$.
\begin{itemize}
\item[i)] The conjugate of $z$ is: $\bar{z}=x_0-x_1i-x_2j-x_3k$.
\item[ii)] The norm of $z$ is $\textrm{n}(z)=z\bar{z}=x_0^2-ax_1^2-bx_2^2-abx_3^2\in R$.
\item[iii)] The trace of $z$ is $\textrm{tr}(z)=z+\bar{z}=2x_0\in R$.
\end{itemize}
\end{defi}
%
%
 It is easy to see that the known characterization for quaternion rings over fields is no longer true in this general setting, even in characteristic  different from two. For instance, consider the ring of Hamilton quaternions over $\mathbb Z$. Clearly, the corresponding quaternion ring $\mathbb{H}(\mathbb Z)$ is not a division ring. On the other hand, we see that this ring is not isomorphic to the matrix ring $\mathbb{M}_2(\mathbb Z)$. To do so, let us consider $z\in\mathbb{H}(\mathbb Z)$ such that $z^2=0$. Then $n(z)=0$ and it follows that $z=0$. Since this property ($z^2=0\rightarrow z=0$) does not hold in $\mathbb{M}_2(\mathbb Z)$ both rings are not isomorphic, as claimed.

The question naturally arises as to whether a quaternion ring over an associative and commutative ring with identity $R$ is isomorphic to the matrix ring $\mathbb{M}_2(R)$. In this paper we consider the case $R=\mathbb{Z}/n\mathbb{Z}$. In particular we prove that, given $n \in \mathbb{N}$, there exist at most two quaternion rings over $\mathbb{Z}/n\mathbb{Z}$ up to isomorphism: $\mathbb{H}(\Z{n})$ and $\left (\frac{1,1}{\Z{n}}\right)$. Moreover, we will see that $\mathbb{H}(\Z{n})\cong \left (\frac{1,1}{\Z{n}}\right)\cong \mathbb{M}_2(\Z{n})$ if and only if $n$ is odd.

Note that if $n=p_1^{r_1}\ldots p_k^{r_k}$ is the prime factorization of $n$, then by the Chinese Remainder Theorem we have that
\begin{equation}\label{l3}\mathbb Z/ n \mathbb{Z}\cong\mathbb Z/{p_1^{r_1}\mathbb{Z}}\times\ldots\oplus\mathbb Z/{p_k^{r_k}\mathbb{Z}}.\end{equation}
Decomposition (\ref{l3}) induces a natural isomorphism
\begin{equation} \label{F1}\left(\frac{a,b}{\Z{n}} \right) \cong \left(\frac{a,b}{\Z{p_1^{r_1}}} \right) \oplus\ldots\oplus \left(\frac{a,b}{\Z{p_k^{r_k}}} \right) .\end{equation}
Consequently, it suffices to study the case when $n$ is a prime-power.

This fact strongly determines the structure of the paper. In section \ref{SEC:POWT} we focus on the case when $n$ is a power of two, while Section \ref{SEC:ODD} is devoted to the odd prime-power case. Before them, Section \ref{SEC:NUM} presents some auxiliary results from Elementary Number Theory that are useful in the sequel and in Section \ref{SEC:HAM} we study Hamilton quaternions over $\Z{n}$ and $\left(\frac{ 1, 1}{\Z{n}}\right)$ due to the main role that these particular cases will play in our classification.

\section{Some number-theoretical auxiliary results}
\label{SEC:NUM}

In this section we collect some results that will be useful in forthcoming sections. They are mainly related to finding solutions to quadratic polynomial congruences in two variables modulo a prime-power.

When we deal with polynomial congruences in one variable, Hensel's lemma plays a key role. The simplest form of Hensel's lemma \cite[p.170]{ROS} states that, under certain regularity conditions, a solution of a polynomial with integer coefficients modulo a prime number $p$ can be lifted to a solution modulo $p^j$ for $j>1$. The following lemma generalizes this result to polynomials in two variables.

\begin{Lemma}
\label{l1}
Let $f(x_1, x_2)$ be a  polynomial in two variables  with integer coefficients. let $p$ be a prime number, and let $A=(a_1, a_2)
\in\mathbb Z^2$ be such that
\begin{equation*}f(a_1, a_2)=0\;\;\;\;\;mod\;p^j,\end{equation*}
with at least one of the partial derivatives $\frac{\partial f}{\partial x_i}$ is nonzero at $(a_1, a_2)$ modulo $p$.
Then, there exist integers $t_1, t_2$ such that
\begin{equation*}f(a_1+t_1p^j, a_2+t_2p^j)=0\;\;\;\;\;mod\;p^{j+1}.\end{equation*}
\end{Lemma}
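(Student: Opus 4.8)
The plan is to imitate the classical one-variable Hensel lifting, using a two-variable Taylor expansion of the polynomial $f$ about the point $A=(a_1,a_2)$. I would search for the lift in the prescribed form, writing the increment as $h_i=t_ip^j$, and expand
\[
f(a_1+t_1p^j,\,a_2+t_2p^j)=f(a_1,a_2)+\frac{\partial f}{\partial x_1}(A)\,t_1p^j+\frac{\partial f}{\partial x_2}(A)\,t_2p^j+R,
\]
where $R$ gathers every term of total degree at least $2$ in the increments. The crucial point is that, since $f$ has integer coefficients, $f(a_1+h_1,a_2+h_2)$ is simply the substitution of $a_i+h_i$ into an integer polynomial; hence, regarded as a polynomial in $h_1,h_2$, its coefficients are integer polynomials in $a_1,a_2$, the linear ones being exactly $\frac{\partial f}{\partial x_i}(A)$. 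In particular $R$ is an integer linear combination of monomials of $h$-degree at least $2$.

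The first key step is the divisibility bookkeeping. Each monomial occurring in $R$ carries at least two factors of $h_i=t_ip^j$, so $R\equiv 0\pmod{p^{2j}}$; and since $j\ge 1$ we have $2j\ge j+1$, whence $R\equiv 0\pmod{p^{j+1}}$. Using the hypothesis $f(a_1,a_2)\equiv 0\pmod{p^j}$, I would write $f(a_1,a_2)=c\,p^j$ for some integer $c$, so that modulo $p^{j+1}$ the condition $f(a_1+t_1p^j,a_2+t_2p^j)\equiv 0$ collapses to
\[
\left(c+\frac{\partial f}{\partial x_1}(A)\,t_1+\frac{\partial f}{\partial x_2}(A)\,t_2\right)p^j\equiv 0\pmod{p^{j+1}},
\]
which is equivalent to the single linear congruence $\frac{\partial f}{\partial x_1}(A)\,t_1+\frac{\partial f}{\partial x_2}(A)\,t_2\equiv -c\pmod{p}$ in the unknowns $t_1,t_2$.

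Finally I would use the regularity hypothesis. By assumption at least one partial derivative, say $\frac{\partial f}{\partial x_1}(A)$, is nonzero modulo $p$ and is therefore invertible in the field $\Z{p}$. Setting $t_2=0$ and choosing $t_1$ to be any integer with $t_1\equiv -c\bigl(\frac{\partial f}{\partial x_1}(A)\bigr)^{-1}\pmod{p}$ solves the congruence and produces the required integers $t_1,t_2$. I do not expect a real obstacle in this argument; the only point that warrants a careful statement is the integrality of the cofactor of $p^{2j}$ in $R$, which is why I would phrase the expansion through substitution into an integer polynomial rather than through the analytic Taylor formula with its factorial denominators.
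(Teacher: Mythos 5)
Your proposal is correct and follows essentially the same route as the paper: expand $f$ about $(a_1,a_2)$ in the increments $t_ip^j$, observe that all terms of degree at least $2$ in the increments are divisible by $p^{2j}$ and hence vanish modulo $p^{j+1}$, and then solve the resulting linear congruence in $t_1,t_2$ using the invertibility modulo $p$ of a nonzero partial derivative. Your one refinement --- treating the expansion as substitution into an integer polynomial rather than invoking the analytic Taylor formula --- neatly sidesteps the factorial denominators, which the paper instead handles by showing each mixed partial $\frac{\partial^s f}{\partial x_2^{s-r}\partial x_1^r}$ is divisible by $r!(s-r)!$; both justifications are sound and the proofs are otherwise identical.
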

\begin{proof}
Let $n$ be the degree of the polynomial $f$. Then, using  Taylor's theorem for functions of two independent variables    we get
\begin{align*}f(a_1+t_1p^j, a_2+t_2p^j)=f(a_1, a_2)+t_1p^j\frac{\partial f}{\partial x_1}(a_1,a_2)+t_2p^j\frac{\partial f}{\partial x_1}(a_1,a_2)+ \\
+\frac{1}{2!}\left(t_1^2p^{2j}\frac{\partial^2 f}{\partial x_1^2}(a_1,a_2)+
2t_1t_2p^{2j}\frac{\partial^2 f}{\partial x_2 x_1}(a_1,a_2)+
t_2^2p^{2j}\frac{\partial^2 f}{\partial x_2^2}(a_1,a_2)\right) +\ldots \\
+ \frac{1}{n!}\left(t_1^np^{nj}\frac{\partial^n f}{\partial x_1^n}(a_1,a_2)+\binom {n} {1}t_1^{n-1}p^{(n-1)j}t_2p^{j}\frac{\partial^{n} f}{\partial x_2 x_1^{n-1}}(a_1,a_2)+\ldots
t_2^{n}p^{nj}\frac{\partial^n f}{\partial x_2^n}(a_1,a_2)\right).
\end{align*}
It is easy to check that each derivative $\frac{\partial^s f}{\partial x_2^{s-r}x_1^r}$ is divisible by $r!(s-r)!$. That is, $\frac{\partial^s f}{\partial x_2^{s-r}x_1^r}=r!(s-r)!g(x,y)$ for some polynomial $g$. Therefore,
\begin{equation*}\binom {s} {r}\frac{\partial^s f}{\partial x_2^{s-r}x_1^r}=\binom {s} {r}r!(s-r)!g(x,y)=s!g(x,y).\end{equation*}
It follows that modulo $p^{j+1}$ the Taylor expansion reduces to
\begin{equation}\label{l2}f(a_1+t_1p^j, a_2+t_2p^j)=f(a_1, a_2)+t_1p^j\frac{\partial f}{\partial x_1}(a_1,a_2)+t_2p^j\frac{\partial f}{\partial x_1}(a_1,a_2).\end{equation}
Finally, we observe that the assumption that at least one of the partial derivatives $\frac{\partial f}{\partial x_i}$ is nonzero at $(a_1, a_2)$ modulo $p$
 imply that we can choose $t_1$ and $t_2$ satisfying $t_1p^j\frac{\partial f}{\partial x_1}(a_1,a_2)+t_2p^j\frac{\partial f}{\partial x_1}(a_1,a_2)=-
  f(a_1, a_2)$. This completes the proof.
\end{proof}

Hence, to find solutions modulo $p^j$ it is enough to find solutions modulo $p$. The following Lemma \cite[p. 157]{OME} deals with the existence of solutions when $p$ is odd.

\begin{Lemma}
\label{l2}
Let $p$ be an odd prime and let $a,b$ be integers such that $\gcd(p,a)=\gcd(p,b)=1$. Then, the equation
$$ax^2+by^2\equiv \alpha \pmod{p}$$
has solutions for every $\alpha\in\mathbb{Z}$.
\end{Lemma}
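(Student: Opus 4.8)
The plan is to use a counting (pigeonhole) argument rather than to appeal to quadratic reciprocity or character sums. First I would count the number of distinct residues attained by the single-variable quadratic map. Since $\gcd(p,a)=1$, the element $a$ is a unit modulo $p$, so $x\mapsto ax^2$ attains exactly as many distinct values as $x\mapsto x^2$. Over an odd prime $p$ the squares number $(p+1)/2$: there are $(p-1)/2$ nonzero quadratic residues, each the image of exactly the two values $\pm x$, together with the value $0$. Hence the set $S=\{ax^2\bmod p:x\in\mathbb{Z}\}$ has precisely $(p+1)/2$ elements.

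Next I would apply the same reasoning to the second variable. Because $\gcd(p,b)=1$, the set $\{by^2\bmod p\}$ also has $(p+1)/2$ elements, and translation by the fixed residue $\alpha$ is a bijection of $\mathbb{Z}/p\mathbb{Z}$; therefore the set $T=\{\alpha-by^2\bmod p:y\in\mathbb{Z}\}$ likewise has exactly $(p+1)/2$ elements.

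The key step is then a pigeonhole count. Both $S$ and $T$ are subsets of $\mathbb{Z}/p\mathbb{Z}$, a set of cardinality $p$, yet $|S|+|T|=\frac{p+1}{2}+\frac{p+1}{2}=p+1>p$. Consequently $S$ and $T$ cannot be disjoint, so there exist integers $x,y$ with $ax^2\equiv\alpha-by^2\pmod{p}$, that is, $ax^2+by^2\equiv\alpha\pmod{p}$, as required. Since $\alpha$ was arbitrary, this settles the claim for every residue class.

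I expect no serious obstacle beyond making the cardinality count fully rigorous. The single point demanding care is the verification that $x\mapsto x^2$ is exactly two-to-one on the nonzero residues, which is precisely where the oddness of $p$ enters: the coincidence $x\equiv -x\pmod p$ forces $x\equiv 0$, so distinct nonzero residues pair off cleanly. This guarantees the count $(p+1)/2$ and hence the strict inequality $|S|+|T|>p$ that drives the pigeonhole conclusion.
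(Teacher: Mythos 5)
Your proof is correct: the counts $|S|=|T|=(p+1)/2$ are justified exactly as you say (multiplication by a unit and translation are bijections, and squaring is two-to-one on nonzero residues since $p$ is odd), and $|S|+|T|=p+1>p$ forces $S\cap T\neq\emptyset$. Note that the paper itself offers no proof of this lemma, citing O'Meara instead; your pigeonhole argument is precisely the standard proof found in that reference, so you have supplied, correctly, the argument the paper delegates to the literature.
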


We can combine the previous results in the following proposition.

\begin{prop}
\label{solp}
Let $p$ be an odd prime number and let $a,b,c\in\mathbb{Z}$ be coprime to $p$. Then, the congruence
$$ax^2+by^2\equiv c\pmod{p^s}$$
has a solution for every $s\geq 1$.
\end{prop}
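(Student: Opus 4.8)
The plan is to induct on $s$, using Lemma \ref{l2} to start the induction and Lemma \ref{l1} to carry out the inductive step. First I would set $f(x,y)=ax^2+by^2-c$ and observe that the base case $s=1$ is precisely the content of Lemma \ref{l2}: since $p$ is odd and $\gcd(p,a)=\gcd(p,b)=1$, there exist integers $x_0,y_0$ with $f(x_0,y_0)\equiv 0 \pmod{p}$.

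For the inductive step, suppose $(a_1,a_2)$ satisfies $f(a_1,a_2)\equiv 0 \pmod{p^j}$ for some $j\geq 1$. To apply Lemma \ref{l1} and lift this to a solution modulo $p^{j+1}$, I must verify the non-degeneracy hypothesis, namely that at least one of the partial derivatives $\frac{\partial f}{\partial x}=2ax$ and $\frac{\partial f}{\partial y}=2by$ is nonzero at $(a_1,a_2)$ modulo $p$.

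The key observation, and the only point that requires a little care, is that this hypothesis is automatic precisely because $c$ is coprime to $p$. Indeed, since $p$ is odd, $2$ is a unit modulo $p$, and since $a$ and $b$ are coprime to $p$, we have $2aa_1\equiv 0$ and $2ba_2\equiv 0 \pmod{p}$ if and only if $a_1\equiv a_2\equiv 0 \pmod{p}$. But if both $a_1$ and $a_2$ vanished modulo $p$, then $f(a_1,a_2)=aa_1^2+ba_2^2-c\equiv -c \pmod{p}$, and the congruence $f(a_1,a_2)\equiv 0 \pmod{p^j}$ (in particular modulo $p$) would force $c\equiv 0 \pmod{p}$, contradicting $\gcd(p,c)=1$. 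Hence at least one partial derivative is nonzero modulo $p$.

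With the hypothesis of Lemma \ref{l1} verified, that lemma produces integers $t_1,t_2$ such that $f(a_1+t_1p^j,\,a_2+t_2p^j)\equiv 0 \pmod{p^{j+1}}$, which completes the inductive step and hence the proof. The hard part is really just the verification of the non-degeneracy condition; once one notices that it follows from the coprimality of $c$ with $p$ (together with the oddness of $p$ and the units $a,b$), everything else reduces to a routine Hensel-style induction built on the two lemmas already at hand.
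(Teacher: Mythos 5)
Your proposal is correct and follows essentially the same route as the paper: a base case from Lemma \ref{l2} followed by Hensel-style lifting via Lemma \ref{l1}, with the non-degeneracy of a partial derivative deduced from $\gcd(p,c)=1$ (the paper states this in one line, while you helpfully re-verify it at every inductive step and make explicit that $2$ is a unit modulo the odd prime $p$).
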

\begin{proof}
Lemma \ref{l2} determines that there exists $(a_1,a_2)$ a solution to the congruences for $s=1$. Moreover, since $p\nmid c$ either $a_1$ or $a_2$ is coprime to $p$. Hence, Lemma \ref{l1} applies.
\end{proof}

Unfortunately, when $p=2$ we can never apply Lemma \ref{l1}. Consequently we can no longer provide a unified approach. The following results deal with some congruences that we will need to solve (in fact we will just need to know that they have a solution) in the sequel.

\begin{prop}
\label{lem4}
Let $a,b\in\mathbb{Z}$ be odd integers with $a \equiv b \pmod{8}$. Then the congruence
$$ax^2 \equiv b \pmod {2^s}$$
has a solution for every $s\geq 1$.
\end{prop}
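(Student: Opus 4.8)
The plan is to exploit that $a$ is odd, hence a unit modulo $2^s$, in order to reduce the statement to a single quadratic congruence. Since $a$ is invertible modulo $2^s$, the congruence $ax^2 \equiv b \pmod{2^s}$ is equivalent to $x^2 \equiv a^{-1}b \pmod{2^s}$, where the inverse is taken modulo $2^s$. The hypothesis $a \equiv b \pmod 8$ gives $a^{-1}b \equiv a^{-1}a \equiv 1 \pmod 8$, so it suffices to prove the auxiliary claim that every integer $c \equiv 1 \pmod 8$ is a square modulo $2^s$ for all $s \geq 1$ (the small cases $s \le 2$ of the original statement also follow directly, taking $x=1$, since $a \equiv b \pmod 8$ forces $a \equiv b \pmod{2^s}$).

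First I would prove the auxiliary claim by induction on $s$. For the base cases $s \in \{1,2,3\}$ note that $c \equiv 1 \pmod 8$ implies $c \equiv 1 \pmod{2^s}$, so $x = 1$ is a solution. This anchors the induction at $s = 3$.

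For the inductive step the key point is precisely the reason Lemma \ref{l1} is unavailable when $p = 2$: the derivative $2x$ of $x^2 - c$ is never a unit modulo $2$, so the ordinary Hensel lift fails and must be replaced by a lift with a larger increment. Given $x_0$ with $x_0^2 \equiv c \pmod{2^s}$ and $s \geq 3$, I would set $x_1 = x_0 + t\,2^{s-1}$. Expanding,
\[ x_1^2 = x_0^2 + x_0 t\, 2^{s} + t^2 2^{2s-2}, \]
and since $2s - 2 \geq s + 1$ for $s \geq 3$, the quadratic error term vanishes modulo $2^{s+1}$. Writing $x_0^2 = c + m\,2^s$, this gives $x_1^2 \equiv c + (m + x_0 t)\,2^s \pmod{2^{s+1}}$, so it only remains to solve the linear congruence $m + x_0 t \equiv 0 \pmod 2$ in $t$. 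Because $c$ is odd, $x_0$ is odd, so this congruence is solvable; the resulting $x_1$ lifts the solution to modulus $2^{s+1}$ and completes the induction.

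The main obstacle is exactly this failure of ordinary Hensel lifting at $p = 2$, and the remedy is twofold: the increment $2^{s-1}$ rather than $2^s$, and the observation that the threshold $s \geq 3$ (equivalently, the hypothesis $c \equiv 1 \pmod 8$ feeding the base case) is what renders the quadratic term $t^2 2^{2s-2}$ negligible modulo $2^{s+1}$. This is the same arithmetic threshold underlying the classical fact that the odd squares modulo $2^s$ with $s \geq 3$ are precisely the residues congruent to $1$ modulo $8$.
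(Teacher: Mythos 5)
Your proposal is correct and follows essentially the same route as the paper: since $a$ is odd it is invertible modulo $2^s$, the congruence reduces to $x^2 \equiv a^{-1}b \pmod{2^s}$ with $a^{-1}b \equiv 1 \pmod{8}$, and one then uses the fact that every residue $\equiv 1 \pmod 8$ is a square modulo $2^s$ for $s \geq 3$. The only difference is that the paper cites this last fact as classical without proof, whereas you supply a correct, self-contained verification of it via the modified Hensel lift $x_1 = x_0 + t\,2^{s-1}$.
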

\begin{proof}
If $1\leq s\leq 2$ the result follows by direct inspection. Now, let us assume that $s\geq 3$. Since $a$ is odd, let $\alpha$ be the inverse of $a$ modulo $2^s$. We have that $a\alpha\equiv 1\pmod{8}$ and hence $b\alpha\equiv 1\pmod{8}$. This means that $b\alpha=8k+1$ and congruence $ax^2\equiv b\pmod{2^s}$ becomes $x^2\equiv 8k+1\pmod{2^s}$. The result follows because $8k+1$ is a quadratic residue modulo $2^s$ if $s\geq 3$.
\end{proof}

\begin{prop}
\label{lem10}
The congruence
$$5x^2+5y^2\equiv 1\pmod{2^s}$$
has a solution for every $s\geq 1$.
\end{prop}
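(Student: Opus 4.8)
The plan is to reduce the two-variable congruence to a one-variable congruence that is already covered by Proposition \ref{lem4}. The reason a direct lifting argument is awkward here is exactly the phenomenon flagged just before the statement: for $f(x,y)=5x^2+5y^2-1$ the partial derivatives are $10x$ and $10y$, both of which vanish modulo $2$, so Lemma \ref{l1} never applies and we cannot bootstrap a solution modulo $2$ up to one modulo $2^s$. Instead I would freeze one of the two variables at a cleverly chosen value so that what remains is a congruence of the shape handled by Proposition \ref{lem4}.

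Concretely, the first step is to fix $y=2$, turning the target congruence into $5x^2 \equiv 1-5\cdot 2^2 \equiv -19 \pmod{2^s}$. The point of this choice is that $-19$ is odd and satisfies $-19 \equiv 5 \pmod 8$, so that, with $a=5$ and $b=-19$, we have $a \equiv b \pmod 8$ and both odd. Proposition \ref{lem4} then guarantees, for every $s \geq 1$, the existence of $x$ with $5x^2 \equiv -19 \pmod{2^s}$. Adding $5\cdot 2^2 = 20$ to both sides gives $5x^2 + 5\cdot 2^2 \equiv 1 \pmod{2^s}$, that is, $(x,2)$ is the desired solution.

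The only thing to verify is that $y=2$ really produces a right-hand side in the admissible range for Proposition \ref{lem4}, which is the short congruence check $1-5y^2 \equiv 5 \pmod 8$; this is where the freedom in choosing $y$ is spent, and any $y \equiv 2 \pmod 4$ would serve equally well. So there is essentially no hard step once the reduction is set up: the entire difficulty of the $p=2$ case has been pushed into Proposition \ref{lem4}, whose proof in turn rests on the classical fact that an integer $\equiv 1 \pmod 8$ is a quadratic residue modulo $2^s$.

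Alternatively one could argue by an explicit induction on $s$, lifting a solution modulo $2^s$ to one modulo $2^{s+1}$ via the substitution $x\mapsto x+2^{s-1}t$ and $y\mapsto y+2^{s-1}u$; for $s\geq 3$ the quadratic correction terms vanish modulo $2^{s+1}$, and one uses that any solution forces $x^2+y^2$ to be odd, hence at least one of $x,y$ to be a unit modulo $2$, which makes the resulting linear congruence in $t,u$ solvable. This route works but requires treating the base cases $s=1,2,3$ separately, so I would prefer the reduction to Proposition \ref{lem4} as the \emph{main} argument.
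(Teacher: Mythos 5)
Your proof is correct, and it takes a genuinely different---and substantially leaner---route than the paper's. You freeze $y=2$, observe that $1-5\cdot 2^2=-19\equiv 5\pmod 8$, and invoke Proposition \ref{lem4} with $a=5$, $b=-19$ to produce $x$ with $5x^2\equiv -19\pmod{2^s}$, so that $(x,2)$ solves the congruence; since Proposition \ref{lem4} covers all $s\geq 1$ (the cases $s=1,2$ by inspection), the reduction is uniform in $s$. The paper argues quite differently: it passes to $x^2+y^2\equiv\alpha_s\pmod{2^s}$ where $\alpha_s$ is the inverse of $5$ modulo $2^s$, computes $\alpha_s$ explicitly in terms of $s\bmod 4$ (in particular $\alpha_{4k}=\alpha_{4k+1}=\alpha_{4k+2}=(2^{4k+2}+1)/5$), shows every prime divisor of $2^{4k+2}+1$ is $\equiv 1\pmod 4$ so that these $\alpha_s$ are sums of two integer squares via the classical two-squares theorem, and then handles the remaining residue class $s\equiv 3\pmod 4$ by an explicit perturbation $A\mapsto A+2^{4k+1}a$ of a solution modulo $2^{4k+2}$. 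Your reduction eliminates the two-squares theorem, the case analysis on $s\bmod 4$, and the final lifting trick, replacing them with the single fact already underlying Proposition \ref{lem4} (odd residues $\equiv 1\pmod 8$ are squares modulo $2^s$), and it even yields an explicit solution family, since any $y\equiv 2\pmod 4$ works as you note; the paper's computation buys only the explicit value of $\alpha_s$, which is never used elsewhere. Your fallback inductive argument is also sound---for $s\geq 3$ the quadratic corrections carry a factor $2^{2s-2}$ with $2s-2\geq s+1$, and the parity of $x^2+y^2$ guarantees a unit coefficient in the resulting linear congruence in $t,u$---but, as you say, it needs separate base cases, so your main argument is the better of the two.
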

\begin{proof}
Given $s\geq 1$ let us denote by $\alpha_s$ the inverse of $5$ modulo $2^s$. The original congruence is equivalent to $x^2+y^2\equiv\alpha_s\pmod{2^s}$.

It can easily be seen that $\alpha_1=\alpha_2=1$, $\alpha_3=5$ and that, for every $k\geq 1$:
$$\alpha_{4k}=\alpha_{4k+1}=\alpha_{4k+3}=\frac{2^{4k+2}+1}{5},$$
$$\alpha_{4k+3}=\alpha_{4k+2}+2^{4k+2}.$$

Now, we claim that every prime divisor of $2^{4k+2}+1$ is of the form $4h+1$: let $p$ be a prime divisor of $2^{4k+2}+1$. Then $2^{4k+2}\equiv -1\pmod{p}$ and the order of $2$ in $\Z{p}$ must be a divisor of $8k+4$ not dividing $4k+2$. This means that the order of $2$ in $\Z{p}$ must be of the form $4h$ with $h\mid 2k+1$. Hence, $4h\mid p-1$ and $p=4h+1$ as claimed.

This implies that $\alpha_{4k},\alpha_{4k+1}$ and $\alpha_{4k+2}$ are the sum of two squares so \emph{a fortiori} the congruence $x^2+y^2\equiv\alpha_s\pmod{2^s}$ has a solution if $s=4k,4k+1,4k+2$.

We know that there exist $A,B\in\mathbb{Z}$ such that $A^2+B^2=\alpha_{4k+2}$ and we can assume, without loss of generality, that $A$ is odd. Let $a$ be the inverse of $A$ modulo $2^{4k+3}$. Then:
\begin{align*}
(A+2^{4k+1}a)^2+B^2&=A^2+B^2+2^{8k+2}a^2+2^{4k+2}\equiv\\ &\equiv A^2+B^2+2^{4k+2}=\alpha_{4k+2}+2^{4k+2}=\\ &=\alpha_{4k+3}\pmod{2^{4k+3}}.
\end{align*}

Consequently, the congruence $x^2+y^2\equiv\alpha_s\pmod{2^s}$ has a solution if $s=4k+2$ and the result follows.
\end{proof}

\section{Hamilton quaternions  over $ \mathbb{Z}_n$ and $\left(\frac{ 1, 1}{\Z{n}}\right)$}
\label{SEC:HAM}

It is well-known that Hamilton quaternions over the real numbers form an $\mathbb{R}-$algebra isomorphic to a subalgebra of the matrix algebra $\mathbb{M}_2(\mathbb{C})$, where the isomorphism is given by:
$$\mathbb{H}(\mathbb{R}):=\left(\frac{-1,-1}{\mathbb{R}}\right) \cong \left\{ \begin{pmatrix} z & w \\ -\overline{w} & \overline{z} \\ \end{pmatrix} : z,w \in \mathbb{C} \right\}.$$

In the same way, it is easy to observe that $\left(\frac{ 1, 1}{\mathbb{R}}\right)$ is also isomorphic to a subalgebra of complex matrices. Namely:
$$\mathbb{L}(\mathbb{R}):=\left(\frac{1,1}{\mathbb{R}}\right) \cong \left\{ \begin{pmatrix} z & w \\ \overline{w} & \overline{z} \\ \end{pmatrix} : z,w \in \mathbb{C} \right\}.$$

These isomorphisms are also valid if we consider the quaternion rings over an arbitrary commutative, associative, unital ring. We just have to replace $\mathbb{C}$ by the quotient ring $R[i]/\langle i^2+1\rangle$. In particular:
$$ \mathbb{H}(R):=\left(\frac{-1,-1}{R}\right) \cong \left\{ \begin{pmatrix} \alpha-\beta i & -\gamma +\delta i \\ \gamma +\delta i & \alpha+\beta i \\ \end{pmatrix} : \alpha,\beta,\gamma,\delta \in R\right\},$$
$$\mathbb{L}(R):=\left(\frac{1,1}{R}\right) \cong \left\{  \begin{pmatrix} \alpha-\beta i &  \gamma +\delta i \\ \gamma -\delta i & \alpha+\beta i \\ \end{pmatrix} : \alpha,\beta,\gamma,\delta \in R\right\}.$$

These isomorphisms turn out to be a very useful tool from the computational point of view when we deal with quaternions over $\Z{n}$. Now, we will have a close look at the rings $\mathbb{H}(\Z{n})$ and $\mathbb{L}(\Z{n})$. Recall that the natural isomorphism (\ref{F1}) allows us to focus on the prime-power case.

\subsection{The odd prime power case}

Hamilton quaternions over the field $\Z{p}$ have been studied in \cite{CEL}. Indeed, in
\cite{CEL} it is constructed an isomorphism between the Hamilton quaternions $\mathbb H(\Z{p})$ and the matrix ring $\mathbb{M}_2(\Z{p})$, for a given odd prime $p$. Here we generalize this result to Hamilton quaternions over $\Z{p^s}$ with $p$ an odd prime and $s\geq 1$.

\begin{prop}
Let $p$ be a odd prime number. Then,
$$\mathbb{H}(\Z{p^s})\cong \mathbb{L}(\Z{p^s})\cong \mathbb{M}_2(\Z{p^s})$$
for every $s\geq 1$.
\end{prop}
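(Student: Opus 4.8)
The plan is to show that $\mathbb{H}(\Z{p^s})\cong\mathbb{M}_2(\Z{p^s})$ by writing down an explicit isomorphism, and then to deduce the corresponding statement for $\mathbb{L}(\Z{p^s})$ by an analogous (or reduced) construction. The key observation is that the obstruction to matching quaternions with $2\times 2$ matrices is purely the availability of a square root of $-1$: the matrix representation of $\mathbb{H}(R)$ given earlier in the excerpt lives in $2\times 2$ matrices over $R[i]/\langle i^2+1\rangle$, so if $-1$ is already a square in $\Z{p^s}$ one can collapse that quotient and land inside $\mathbb{M}_2(\Z{p^s})$ itself.

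First I would treat the case $p\equiv 1\pmod 4$. Here $-1$ is a quadratic residue modulo $p$, and by (one-variable) Hensel lifting this square root lifts to some $u\in\Z{p^s}$ with $u^2\equiv -1\pmod{p^s}$. Using such a $u$ I would define a map on the basis $\{1,i,j,k\}$ of $\mathbb{H}(\Z{p^s})$ into $\mathbb{M}_2(\Z{p^s})$, for instance sending $i\mapsto\begin{pmatrix}u&0\\0&-u\end{pmatrix}$ and $j\mapsto\begin{pmatrix}0&1\\-1&0\end{pmatrix}$, and then verifying that the defining relations $i^2=j^2=-1$ and $ij=-ji$ are satisfied by the images. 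Extending $R$-linearly yields a ring homomorphism; since both sides are free $\Z{p^s}$-modules of rank $4$, I would check injectivity (equivalently that the four image matrices are $\Z{p^s}$-linearly independent, e.g. via a determinant/unit computation) to conclude it is an isomorphism.

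The case $p\equiv 3\pmod 4$ is where the main obstacle lies, since $-1$ is not a square modulo such a $p$ and the previous trick fails directly. Here I would instead exploit the genuinely number-theoretic input developed in Section~\ref{SEC:NUM}: by Proposition~\ref{solp} the congruence $-x^2-y^2\equiv -1\pmod{p^s}$ (equivalently $x^2+y^2\equiv -1$) has a solution, so there exist $c,d\in\Z{p^s}$ with $c^2+d^2=-1$. I would then build the isomorphism from these, sending $i\mapsto\begin{pmatrix}c&d\\d&-c\end{pmatrix}$ and $j\mapsto\begin{pmatrix}d&-c\\-c&-d\end{pmatrix}$ (or a similarly chosen pair), so that each image squares to $-I$ and the two anticommute; the same rank-$4$ freeness plus linear-independence argument then finishes the claim.

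For $\mathbb{L}(\Z{p^s})$ the relations are $i^2=j^2=+1$ rather than $-1$, so I would repeat the construction seeking matrices squaring to $+I$, which is easier: for the $i$ image one may take $\begin{pmatrix}1&0\\0&-1\end{pmatrix}$, and for $j$ a second anticommuting involution such as $\begin{pmatrix}0&1\\1&0\end{pmatrix}$, whose products again span $\mathbb{M}_2(\Z{p^s})$. Alternatively, since both $\mathbb{H}$ and $\mathbb{L}$ will be shown isomorphic to $\mathbb{M}_2(\Z{p^s})$, the chain $\mathbb{H}\cong\mathbb{M}_2\cong\mathbb{L}$ gives the full statement. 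I expect the delicate point throughout to be verifying \emph{surjectivity}, i.e. that the chosen images are $\Z{p^s}$-linearly independent over the \emph{ring} $\Z{p^s}$ (not merely over a field), which requires that the relevant determinants be \emph{units} modulo $p$; this is precisely what the coprimality hypotheses in Proposition~\ref{solp} and the oddness of $p$ guarantee.
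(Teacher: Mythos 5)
Your proposal is correct and follows essentially the same route as the paper: produce $c,d$ with $c^2+d^2\equiv -1\pmod{p^s}$ via Proposition~\ref{solp}, map $i,j$ to explicit anticommuting matrices built from $c,d$ (your $p\equiv 3\pmod 4$ matrices are, up to relabeling, exactly the paper's $\phi(i)=\left(\begin{smallmatrix}0&1\\-1&0\end{smallmatrix}\right)$, $\phi(j)=\left(\begin{smallmatrix}a&b\\b&-a\end{smallmatrix}\right)$), and conclude by a unit-determinant/linear-independence check, with the analogous construction for $\mathbb{L}$. Your only deviation is the case split on $p\bmod 4$, which is harmless but unnecessary, since your second construction works uniformly for all odd $p$ --- which is precisely what the paper does.
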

\begin{proof}
Due to Proposition \ref{solp} the congruence $x^2+y^2\equiv -1\pmod{p^s}$ has a solution for every $s\geq 1$. Let $a, b\in\Z{p^s}$ such that $a^2+b^2=-1$ and define an algebra homomorphism $\phi: \mathbb{H}(\Z{p^s})\longrightarrow \mathbb{M}_2(\Z{p^s})$ by:
$$\phi(i)=\begin{pmatrix}0&1\\ -1&0\end{pmatrix},\quad \phi(j)=\begin{pmatrix}a&b\\b& -a\end{pmatrix}.$$

The system of linear equations associated to
$$\phi(x_0+x_1i+x_2j+x_3k)=\begin{pmatrix}X&Y\\Z&T\end{pmatrix}.$$
has always a solution, namely:
\begin{align*}
x_0&=(X+T)/2,\\ x_1&=(Y - Z)/2,\\ x_2&=(a T - a X - b  Y - b Z)/2,\\ x_3&=(b  T - b  X + a Y + a Z)/2.\end{align*}
Hence, $\phi$ is an isomorphism.

The case $\mathbb{L}(\Z{p^s})$ is completely analogous considering $a, b\in\Z{p^s}$ such that $a^2+b^2=1$. In this case the isomorphism is given by:
$$\phi(i)=\begin{pmatrix}0&1\\ 1&0\end{pmatrix},\qquad \phi(j)=\begin{pmatrix}a&b\\b& -a\end{pmatrix}.$$
\end{proof}

\begin{cor}
Let $n$ be a odd integer. Then,
$$\mathbb{H}(\Z{n})\cong \mathbb{L}(\Z{n})\cong \mathbb{M}_2(\Z{n}).$$
\end{cor}

\subsection{The power of two case}
It is clear that $\mathbb{H}(\Z{2})\cong \mathbb{L}(\Z{2})$. Now we will se that if $s>1$, then $\mathbb{H}(\Z{2^s})\not\cong \mathbb{L}(\Z{2^s})$. To do so we first focus on the case $s=2$.

\begin{Lemma} \label{cuat}
$\mathbb{H}(\Z{4})\not\cong\mathbb{L}(\Z{4})$.
\end{Lemma}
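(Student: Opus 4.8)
The plan is to exhibit a ring-theoretic invariant that takes different values on the two rings. A convenient choice is the cardinality of the set of square roots of $-1$, namely $N(A)=\#\{z\in A:z^2=-1\}$. Since any ring isomorphism $\phi\colon A\to B$ sends $1_A$ to $1_B$, it sends $-1_A$ to $-1_B$ and satisfies $\phi(z^2)=\phi(z)^2$; hence it restricts to a bijection between the two solution sets, so $N(\mathbb{H}(\Z{4}))=N(\mathbb{L}(\Z{4}))$ would be forced if the rings were isomorphic. Thus it suffices to compute $N$ on both sides and check that the values differ.

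First I would record the square of a general element. Writing $z=x_0+x_1i+x_2j+x_3k$ and multiplying out using the rules of Definition \ref{def} (together with the consequences $k^2=-ab$, $ik=-ki=aj$ and $jk=-kj=-bi$), every mixed term $x_mx_n(e_me_n+e_ne_m)$ with $1\le m<n\le 3$ cancels, and one obtains
\begin{equation*}
z^2=\bigl(x_0^2+ax_1^2+bx_2^2-abx_3^2\bigr)+2x_0x_1\,i+2x_0x_2\,j+2x_0x_3\,k.
\end{equation*}
Thus $z^2=-1$ is equivalent to the three linear congruences $2x_0x_1\equiv 2x_0x_2\equiv 2x_0x_3\equiv 0\pmod 4$ together with the scalar congruence $x_0^2+ax_1^2+bx_2^2-abx_3^2\equiv -1\pmod 4$.

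The key simplification is that modulo $4$ a square is $0$ or $1$ according as its base is even or odd. If $x_0$ is odd the linear congruences force $x_1,x_2,x_3$ to be even, whence the scalar equation reads $x_0^2\equiv -1$, i.e. $1\equiv 3\pmod 4$, which is impossible; so every solution has $x_0$ even, and then the linear congruences hold automatically and $x_0^2\equiv 0$. Writing $\epsilon_t\in\{0,1\}$ for the parity of $x_t$, the problem reduces to counting parity patterns $(\epsilon_1,\epsilon_2,\epsilon_3)$ solving the residual scalar congruence, each admissible pattern contributing a factor $2^3$ (two residues of each prescribed parity) and the choice of even $x_0$ contributing a further factor $2$. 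For $\mathbb{H}(\Z{4})$ one has $a=b=-1$, so the congruence becomes $\epsilon_1+\epsilon_2+\epsilon_3\equiv 1\pmod 4$, solved by the three patterns with exactly one odd coordinate and giving $N=3\cdot 2^3\cdot 2=48$. For $\mathbb{L}(\Z{4})$ one has $a=b=1$, the congruence becomes $\epsilon_1+\epsilon_2-\epsilon_3\equiv -1\pmod 4$, solved only by $(\epsilon_1,\epsilon_2,\epsilon_3)=(0,0,1)$ and giving $N=1\cdot 2^3\cdot 2=16$. Since $48\neq 16$, the two rings cannot be isomorphic.

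I expect the only real care to lie in the bookkeeping of the parity count, in particular in verifying that the vector part of $z^2$ genuinely vanishes for every candidate solution (which is exactly what forces the reduction to $x_0$ even) and that no solutions are overlooked in the $x_0$-odd case. Everything else is a finite, routine verification; if one prefers to avoid even this much counting, the same argument applied to the invariant $\#\{z:z^2=0\}$ yields $16$ versus $48$ and the identical conclusion.
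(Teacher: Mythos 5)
Your proof is correct and takes essentially the same approach as the paper: the paper distinguishes the two rings by counting solutions of $z^2=1$ (32 involutions in $\mathbb{H}(\Z{4})$ versus 64 in $\mathbb{L}(\Z{4})$), while you count solutions of $z^2=-1$ (48 versus 16), the same isomorphism-invariant counting argument with a different constant. Your write-up has the minor merit of carrying out the parity count explicitly, where the paper only asserts the result of the computation.
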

\begin{proof}
It can be explicitly computed that $\mathbb{H}(\Z{4})$ has 32 involutions while $\mathbb{L}(\Z{4})$ has 64.
\end{proof}

\begin{prop}
If $s\geq 3$  then $\mathbb{L}(\Z{2^s})\not\cong\mathbb{H}(\Z{2^s})$.
\end{prop}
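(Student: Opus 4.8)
The plan is to reduce the general case $s\geq 3$ to the case $s=2$ already settled in Lemma \ref{cuat}, by passing to a quotient ring that every ring isomorphism must respect. The crucial observation is that in any unital ring $A$ the subset $4A=\{4z:z\in A\}$ is a two-sided ideal, because $4=4\cdot 1$ is central, and that this ideal is preserved by isomorphisms. Indeed, if $\phi\colon A\to B$ is a ring isomorphism then $\phi(4\cdot 1_A)=4\phi(1_A)=4\cdot 1_B$ by additivity, so $\phi(4z)=4\phi(z)$ and hence $\phi(4A)=4B$. Consequently $\phi$ descends to an isomorphism of quotients $A/4A\cong B/4B$.

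The second step is to identify these quotients. Writing $A=\mathbb{L}(\Z{2^s})$, coordinatewise reduction modulo $4$ gives a surjective ring homomorphism $A\to\mathbb{L}(\Z{4})$: it is well defined because $\Z{2^s}\to\Z{4}$ is a ring homomorphism sending $1$ to $1$ and the structure constant $a=b=1$ to itself, and its kernel is precisely $4A$ (this amounts to the fact that multiplication by $4$ on $\Z{2^s}$ has image the set of residues $\equiv 0 \pmod 4$). Thus $\mathbb{L}(\Z{2^s})/4\mathbb{L}(\Z{2^s})\cong\mathbb{L}(\Z{4})$, and the identical argument gives $\mathbb{H}(\Z{2^s})/4\mathbb{H}(\Z{2^s})\cong\mathbb{H}(\Z{4})$. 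Here it is important that $-1\not\equiv 1\pmod 4$, so that the two quotients are exactly the genuinely distinct rings compared in Lemma \ref{cuat}.

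Combining the two steps yields the result: an isomorphism $\mathbb{L}(\Z{2^s})\cong\mathbb{H}(\Z{2^s})$ would induce $\mathbb{L}(\Z{2^s})/4\mathbb{L}(\Z{2^s})\cong\mathbb{H}(\Z{2^s})/4\mathbb{H}(\Z{2^s})$, that is $\mathbb{L}(\Z{4})\cong\mathbb{H}(\Z{4})$, contradicting Lemma \ref{cuat}. Since the construction of the reduction map only requires $s\geq 2$, this covers every $s\geq 3$ at once and avoids treating the exponents one by one.

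The only point requiring genuine care is the kernel computation in the second step, namely that $4A$ coincides with the kernel of reduction modulo $4$ and that the quotient inherits the correct quaternion multiplication; both are routine once $s\geq 2$. An alternative, more computational route would extend the involution count of Lemma \ref{cuat} to all $s$. Using $z^2=\big(x_0^2+ax_1^2+bx_2^2-abx_3^2\big)+2x_0(x_1i+x_2j+x_3k)$, the equation $z^2=1$ splits into a scalar congruence together with $2x_0x_1=2x_0x_2=2x_0x_3=0$, and one would compare the number of solutions for $(a,b)=(1,1)$ and $(a,b)=(-1,-1)$ modulo $2^s$. This works in principle but is the less transparent approach, since the scalar congruence must be counted carefully modulo $2^s$; I would expect the counting discrepancy, rather than any conceptual difficulty, to be the main obstacle along that path, which is precisely why I favor the quotient argument.
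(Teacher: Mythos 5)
Your proposal is correct and follows essentially the same route as the paper: the paper likewise observes that an isomorphism $\mathbb{H}(\Z{2^s})\cong\mathbb{L}(\Z{2^s})$ would descend to the quotients by $4\mathbb{H}(\Z{2^s})$ and $4\mathbb{L}(\Z{2^s})$, identifies these quotients (as kernels of reduction modulo $4$) with $\mathbb{H}(\Z{4})$ and $\mathbb{L}(\Z{4})$, and derives a contradiction with Lemma \ref{cuat}. Your write-up merely supplies more detail on why $4A$ is preserved by any ring isomorphism and on the kernel computation, which the paper leaves implicit.
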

\begin{proof}
Assume, on the contrary, that $\mathbb{H}(\Z{2^s}) \cong  \mathbb{L}(\mathbb{Z}/{2^s}\mathbb{Z})$ whit $s>1$. This isomorphism naturally induces an isomorphism $$\mathbb{H}(\Z{2^s})/4\mathbb{H}(\Z{2^s}) \cong  \mathbb{L}(\Z{2^s})/4\mathbb{L}(\Z{2^s}).$$ Now, $4\mathbb{H}(\Z{2^s})$ and $4\mathbb{L}(\Z{2^s})$ are, respectively, the kernels of the surjective homomorphisms
$$\mathbb{H}(\Z{2^s}) \xrightarrow{\textrm{mod}\; 4}\mathbb{H}(\Z{4}),$$
$$\mathbb{L}(\Z{2^s}) \xrightarrow{\textrm{mod}\; 4}\mathbb{L}(\Z{4}).$$
Hence, it follows that $\mathbb H(\Z{4}) \cong \mathbb L(\Z{4})$ contradicting Lemma \ref{cuat}.
\end{proof}

To end this section we will see that both $\mathbb{H}(\Z{2^s})$ and $\mathbb{L}(\Z{2^s})$ are local fields, so that they cannot be isomorphic to $\mathbb{M}_2(\Z{2^s})$. Recall that a unital ring $R$ is local if and only if $1\neq 0$ and for every $r\in R$ either $r$ or $1-r$ is a unit.

\begin{prop}
If $s\geq 1$ then $\mathbb{H}(\Z{2^s})$ and $\mathbb{L}(\Z{2^s})$ are local fields.
\end{prop}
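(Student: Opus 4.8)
The plan is to describe the units of both rings by means of the norm and then to check the local condition recalled above through a parity count on the four coordinates. Throughout write $R=\Z{2^s}$ and $z=x_0+x_1i+x_2j+x_3k$. Recall from Definition \ref{trn} that $z\bar z=\bar z z=\mathrm n(z)\in R$, the norm being a quadratic form in $x_0,x_1,x_2,x_3$ whose coefficients $1,-a,-b,-ab$ are all odd in both $\mathbb H(R)$ (where $a=b=-1$) and $\mathbb L(R)$ (where $a=b=1$). Consequently, modulo $2$,
\begin{equation*}\mathrm n(z)\equiv x_0^2+x_1^2+x_2^2+x_3^2\equiv x_0+x_1+x_2+x_3\pmod 2.\end{equation*}

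First I would prove that $z$ is a unit if and only if $\mathrm n(z)$ is a unit of $R$, that is, if and only if $\mathrm n(z)$ is odd. The implication from right to left is immediate: if $\mathrm n(z)$ is invertible then $\mathrm n(z)^{-1}\bar z$ is a two-sided inverse of $z$, because $z\bar z=\bar z z=\mathrm n(z)$ is a central scalar. For the converse I would invoke the multiplicativity of the norm: since conjugation is an anti-automorphism, $\overline{zw}=\bar w\bar z$, and since $w\bar w=\mathrm n(w)\in R$ is central, one obtains $\mathrm n(zw)=zw\,\overline{zw}=z\,\mathrm n(w)\,\bar z=\mathrm n(z)\mathrm n(w)$; hence a unit $z$ satisfies $\mathrm n(z)\mathrm n(z^{-1})=\mathrm n(1)=1$ and $\mathrm n(z)$ is a unit. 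As the units of $\Z{2^s}$ are exactly the odd residues, $z$ is a unit precisely when $x_0+x_1+x_2+x_3$ is odd.

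Finally I would verify the local condition. Given any $z$, the element $1-z=(1-x_0)-x_1i-x_2j-x_3k$ has coordinate sum $(1-x_0)-x_1-x_2-x_3\equiv 1+(x_0+x_1+x_2+x_3)\pmod 2$, since $-x\equiv x\pmod 2$. Thus exactly one of $\mathrm n(z)$ and $\mathrm n(1-z)$ is odd, so exactly one of $z$ and $1-z$ is a unit. As $1\neq 0$ in these rings, this is precisely the defining property recalled above, and both $\mathbb H(\Z{2^s})$ and $\mathbb L(\Z{2^s})$ are local. The only delicate point is the unit criterion, concretely the multiplicativity of the norm used in the forward direction; everything after it is an elementary parity computation. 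Should one wish to bypass multiplicativity, the same criterion follows by reduction modulo $2$: writing $\pi$ for the reduction map, a unit $z$ maps to a unit $\pi(z)$, whereas $\mathrm n(z)$ even would give $\pi(z)\overline{\pi(z)}=\pi(\mathrm n(z))=0$, forcing $\pi(z)=0$ and contradicting that $\pi(z)$ is a unit.
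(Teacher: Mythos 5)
Your proof is correct and takes essentially the same route as the paper's: both reduce the local condition ($z$ or $1-z$ is a unit) to a parity statement about the norm, you via the mod-$2$ coordinate sum and the paper via the identity $\mathrm{n}(1-z)=1+\mathrm{n}(z)-\mathrm{tr}(z)$ together with $\mathrm{tr}(z)=2x_0$ being even. The only substantive difference is that you explicitly establish the criterion that $z$ is a unit if and only if $\mathrm{n}(z)$ is a unit (via multiplicativity of the norm), a fact the paper's proof uses without justification.
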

\begin{proof}
We will only focus on $\mathbb{H}(\Z{2^s})$, the other case being completely analogous. Obviously $1\neq 0$, now assume that $z\in \mathbb{H}(\Z{2^s})$ is not a unit. This means that $\textrm{n}(z)$ is not a unit in $\Z{2^s}$; i.e., that $\textrm{n}(z)$ is even. Now, $\textrm{n}(1-z)=(1-z)\overline{(1-z)}=(1-z)(1-\bar{z})=1+\textrm{n}(z)-\textrm{tr}(z)$. Since $\textrm{tr}(z)$ is even (recall Definition \ref{trn}) it follows that $\textrm{n}(1-z)$ is odd; i.e., it is a unit in $\Z{2^s}$ and, consequently $1-z$ is a unit.
\end{proof}

\subsection{The general case}
We can summarize the previous work in the following Theorem.

\begin{Theorem}
Let $n$ be an integer. Then:
\begin{itemize}
\item[i)] $\mathbb{H}(\Z{n}) \cong \mathbb{L}(\Z{n}) \cong \mathbb{M}_2(\Z{n})$, if $n$ is odd.
\item[ii)] $\mathbb{H}(\Z{n}) \cong \mathbb{L}(\Z{n}) \not\cong \mathbb{M}_2(\Z{n})$, if $n\equiv 2\pmod{4}$.
\item[iii)] $\mathbb{M}_2(\Z{n})\not\cong \mathbb{H}(\Z{n}) \ncong \mathbb{L}(\Z{n}) \ncong \mathbb{M}_2(\Z{n})$, if $n\equiv 0\pmod{4}$.
\end{itemize}
\end{Theorem}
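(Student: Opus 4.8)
The plan is to assemble this theorem directly from the prime-power results already proved, gluing them together with the Chinese Remainder decomposition (\ref{F1}) and exploiting one structural observation: for any unital ring $R$ the element $2=1+1$ (resp. $4=1+1+1+1$) is fixed by every unital ring homomorphism, so the two-sided ideals $2R$ and $4R$ are \emph{canonical}. Consequently any ring isomorphism $R\cong S$ automatically carries $2R$ onto $2S$ and $4R$ onto $4S$, and therefore descends to isomorphisms of the quotients $R/2R\cong S/2S$ and $R/4R\cong S/4S$. This is exactly the reduction device already used in the proposition for $s\geq 3$, and it will let me test the candidate isomorphisms against invariants computed at the prime $2$.

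Part (i) requires no new argument: it is precisely the corollary for odd $n$ established above. For part (ii) I would write $n=2m$ with $m$ odd. Then (\ref{F1}) yields $\mathbb{H}(\Z{n})\cong\mathbb{H}(\Z{2})\oplus\mathbb{H}(\Z{m})$ and $\mathbb{L}(\Z{n})\cong\mathbb{L}(\Z{2})\oplus\mathbb{L}(\Z{m})$. Matching the summands, $\mathbb{H}(\Z{2})\cong\mathbb{L}(\Z{2})$ and $\mathbb{H}(\Z{m})\cong\mathbb{L}(\Z{m})$ by the odd-case corollary, whence $\mathbb{H}(\Z{n})\cong\mathbb{L}(\Z{n})$. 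To separate both from $\mathbb{M}_2(\Z{n})$ I would reduce modulo $2$: since $2\mid n$ one has $\Z{n}/2\Z{n}\cong\Z{2}$, so $\mathbb{H}(\Z{n})/2\mathbb{H}(\Z{n})\cong\mathbb{H}(\Z{2})$ and $\mathbb{L}(\Z{n})/2\mathbb{L}(\Z{n})\cong\mathbb{L}(\Z{2})$, while $\mathbb{M}_2(\Z{n})/2\mathbb{M}_2(\Z{n})\cong\mathbb{M}_2(\Z{2})$. As noted in the introduction, in characteristic $2$ the relation $ij=-ji$ becomes $ij=ji$, so $\mathbb{H}(\Z{2})$ (and $\mathbb{L}(\Z{2})$) is commutative, whereas $\mathbb{M}_2(\Z{2})$ is not. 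By the canonicity of the ideal $2R$, an isomorphism $\mathbb{H}(\Z{n})\cong\mathbb{M}_2(\Z{n})$ would force $\mathbb{H}(\Z{2})\cong\mathbb{M}_2(\Z{2})$, contradicting the commutativity dichotomy; the same applies to $\mathbb{L}(\Z{n})$.

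For part (iii) I would write $4\mid n$, so $\Z{n}/4\Z{n}\cong\Z{4}$. The reduction modulo $4$ gives $\mathbb{H}(\Z{n})/4\mathbb{H}(\Z{n})\cong\mathbb{H}(\Z{4})$ and $\mathbb{L}(\Z{n})/4\mathbb{L}(\Z{n})\cong\mathbb{L}(\Z{4})$, so an isomorphism $\mathbb{H}(\Z{n})\cong\mathbb{L}(\Z{n})$ would, by canonicity of $4R$, induce $\mathbb{H}(\Z{4})\cong\mathbb{L}(\Z{4})$, which Lemma \ref{cuat} forbids; hence $\mathbb{H}(\Z{n})\ncong\mathbb{L}(\Z{n})$. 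The two remaining non-isomorphisms, $\mathbb{H}(\Z{n})\ncong\mathbb{M}_2(\Z{n})$ and $\mathbb{L}(\Z{n})\ncong\mathbb{M}_2(\Z{n})$, follow verbatim from the mod-$2$ commutativity argument of part (ii), since $4\mid n$ again guarantees $2\mid n$. This establishes that all three rings are pairwise non-isomorphic.

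Since every quantitative input (the odd-prime-power isomorphism, the involution count behind Lemma \ref{cuat}, and the local-ring property) is already available, the only genuinely delicate point is the one I have isolated at the outset: verifying that the reductions modulo $2$ and modulo $4$ are intrinsic, i.e.\ that $2R$ and $4R$ are transported by \emph{abstract} ring isomorphisms rather than merely by $\Z{n}$-algebra maps. Once that is granted, the proof is pure bookkeeping. I would therefore spend the bulk of the write-up making that canonicity explicit (noting $\phi(2z)=(\phi(1)+\phi(1))\phi(z)=2\phi(z)$ for any unital ring map $\phi$, and similarly for $4$), and then let the three cases fall out mechanically from (\ref{F1}), the odd-case corollary, Lemma \ref{cuat}, and the commutativity of $\mathbb{H}(\Z{2})$.
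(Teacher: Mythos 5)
Your proposal is correct, and it is in fact more careful than the paper itself, which presents this theorem purely as a summary (``We can summarize the previous work\ldots'') with no written proof. The paper's implicit argument uses the same ingredients you do --- the odd corollary, $\mathbb{H}(\Z{2})\cong\mathbb{L}(\Z{2})$, Lemma \ref{cuat} via the mod-$4$ reduction, and, for separating $\mathbb{H}$ and $\mathbb{L}$ from the matrix ring at the prime $2$, the fact that $\mathbb{H}(\Z{2^s})$ and $\mathbb{L}(\Z{2^s})$ are local while $\mathbb{M}_2(\Z{2^s})$ is not. You deviate in two respects. First, instead of locality you reduce modulo $2$ and use commutativity: $\mathbb{H}(\Z{2})=\mathbb{L}(\Z{2})$ is commutative (since $-1=1$ forces $ij=ji$), while $\mathbb{M}_2(\Z{2})$ is not; this is at least as elementary. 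Second --- and this is the genuine added value --- you notice that the negative statements for general $n$ do not follow from the prime-power results by CRT alone: decomposition (\ref{F1}) gives $\mathbb{H}(\Z{n})\cong\mathbb{H}(\Z{2^s})\times\mathbb{M}_2(\Z{m})$ and its analogues, and one cannot simply cancel the common factor $\mathbb{M}_2(\Z{m})$ against an abstract isomorphism; moreover, locality is destroyed by the product, so the paper's invariant does not transfer to composite $n$ without extra work. Your observation that $2R$ and $4R$ are canonical ideals, carried onto $2S$ and $4S$ by \emph{any} unital ring isomorphism $R\cong S$ (additivity alone gives $\phi(2z)=2\phi(z)$), so that every isomorphism descends to $R/2R$ and $R/4R$, fixes both issues at once and is exactly the right globalization of the trick the paper uses in its $s\geq 3$ proposition. (An alternative with the same effect: a finite ring is canonically the product of its $p$-primary components, so an isomorphism of the full rings induces an isomorphism of the $2$-parts.) The routine verifications you leave implicit --- that $\mathbb{H}(\Z{n})/2\mathbb{H}(\Z{n})\cong\mathbb{H}(\Z{2})$ and $\mathbb{H}(\Z{n})/4\mathbb{H}(\Z{n})\cong\mathbb{H}(\Z{4})$ when $2\mid n$, resp.\ $4\mid n$, because the quaternion ring is a free rank-$4$ module over $\Z{n}$ and $\Z{n}/2\Z{n}\cong\Z{2}$, $\Z{n}/4\Z{n}\cong\Z{4}$ --- all go through, so the three cases assemble exactly as you describe.
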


\section{Quaternions rings over $\mathbb{Z}_n$ whit $n$ odd}
\label{SEC:ODD}

It is well-known that every quaternion algebra over a finite field $\mathbb{F}_q$ of characteristic not two splits; i.e., it is isomorphic to the matrix ring of $\mathbb{M}_2(\mathbb{F}_q)$. This is a consequence of two classical theorems by Wedderburn: the Structure theorem on finite dimensional simple algebras over a field and Wedderburn's little theorem. The following theorem generalizes this result to quaternion algebras over the ring of integers modulo an odd integer $n$. Again, the natural isomorphism (\ref{F1}) allows us to consider only the prime-power case.

\begin{Theorem}
Let $p$ be an odd prime number and let $s\geq 1$. Then, all quaternion algebras defined over the ring of residual classes $\Z{p^s}$ are isomorphic. Moreover, all quaternion algebras defined over $\Z{p^s}$ split; i.e., they are isomorphic to the matrix ring $\mathbb{M}_2(\Z{p^r})$.
\end{Theorem}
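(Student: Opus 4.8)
The plan is to reduce the general statement to the two distinguished cases $\mathbb{H}(\Z{p^s})$ and $\mathbb{L}(\Z{p^s})$ that were already handled in Section~\ref{SEC:HAM}, and then to show that \emph{every} quaternion ring $\left(\frac{a,b}{\Z{p^s}}\right)$ with $a,b$ units collapses onto one of these. Since $p$ is odd, the previous corollary already gives $\mathbb{H}(\Z{p^s})\cong\mathbb{L}(\Z{p^s})\cong\mathbb{M}_2(\Z{p^s})$, so it suffices to prove that an arbitrary $\left(\frac{a,b}{\Z{p^s}}\right)$ is isomorphic to one of them. The natural strategy mirrors the splitting argument used for $\mathbb{H}(\Z{p^s})$: I would construct an explicit algebra homomorphism $\phi\colon\left(\frac{a,b}{\Z{p^s}}\right)\longrightarrow\mathbb{M}_2(\Z{p^s})$ by sending $i$ and $j$ to concrete matrices whose squares realize the relations $i^2=a$, $j^2=b$, $ij=-ji$, and then verify that $\phi$ is bijective.

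The key device is Proposition~\ref{solp}, which is exactly tailored for this. The most transparent choice is to look for matrices of the shape
\begin{equation*}
\phi(i)=\begin{pmatrix}0&1\\ a&0\end{pmatrix},\qquad \phi(j)=\begin{pmatrix}u&v\\ w&-u\end{pmatrix},
\end{equation*}
where the first matrix squares to $a I$ automatically, has trace zero, and anticommutes with any trace-zero matrix $\phi(j)$. The constraint $\phi(j)^2=bI$ amounts to $u^2+vw=b$, while the anticommutation $\phi(i)\phi(j)=-\phi(j)\phi(i)$ forces a linear relation tying $v,w$ to $u$ through $a$. After eliminating, the existence of a valid $(u,v,w)$ reduces to solving a congruence of the form $\alpha x^2+\beta y^2\equiv b\pmod{p^s}$ with $\alpha,\beta$ coprime to $p$ (here the units $a,b$ guarantee coprimality), which Proposition~\ref{solp} guarantees. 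Once $\phi(i),\phi(j)$ are fixed, $\phi$ extends $R$-linearly and the multiplicative relations hold by construction, so $\phi$ is an algebra homomorphism.

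To finish I would show $\phi$ is an isomorphism exactly as in the Hamilton case: the four matrices $I,\phi(i),\phi(j),\phi(i)\phi(j)$ span $\mathbb{M}_2(\Z{p^s})$ as a free module, which I can check either by solving the associated linear system $\phi(x_0+x_1i+x_2j+x_3k)=\left(\begin{smallmatrix}X&Y\\ Z&T\end{smallmatrix}\right)$ explicitly for $x_0,x_1,x_2,x_3$ (this is where $p$ odd is used, since the inversion involves dividing by $2$ and by the unit $a$), or by noting that a surjective $R$-module map between free modules of the same finite rank over a local ring is an isomorphism. Either route yields $\left(\frac{a,b}{\Z{p^s}}\right)\cong\mathbb{M}_2(\Z{p^s})$, and chaining through the already-established $\mathbb{H}(\Z{p^s})\cong\mathbb{M}_2(\Z{p^s})$ shows all such rings are mutually isomorphic and all split.

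The main obstacle is the homomorphism-existence step, not the bijectivity: one must confirm that the relations $i^2=a$, $j^2=b$, $ij=-ji$ can be simultaneously realized by $2\times 2$ matrices over $\Z{p^s}$, and this hinges on producing a solution to the resulting two-variable quadratic congruence. This is precisely why Proposition~\ref{solp} (built on Hensel lifting, Lemma~\ref{l1}, and Lemma~\ref{l2}) was developed, and the coprimality hypotheses there match the requirement that $a$ and $b$ be units. Verifying bijectivity is then routine linear algebra over the local ring $\Z{p^s}$.
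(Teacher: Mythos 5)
Your proposal is correct and matches the paper's own proof essentially step for step: the paper also uses Proposition~\ref{solp} to solve $u^2-av^2\equiv b\pmod{p^s}$ and sends $i,j$ to the trace-zero matrices $\left(\begin{smallmatrix}0&a\\1&0\end{smallmatrix}\right)$ and $\left(\begin{smallmatrix}u&-av\\v&-u\end{smallmatrix}\right)$ (transposes of your choices), then verifies that $\{I,A,B,AB\}$ is a basis by solving the same linear system, where oddness of $p$ enters through division by $2$, $a$ and $b$. The only cosmetic difference is that your elimination ties $w=-av$ to $v$ alone (not to $u$), yielding exactly the congruence $x^2-ay^2\equiv b$ with coefficients $1$ and $-a$ coprime to $p$, as in the paper.
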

\begin{proof}
Let $a,b$ be units in $\Z{p^s}$. Due to Proposition \ref{solp} we can find $u,v \in \Z{p^s}$ such that $b \equiv u^2 - a v^2 \pmod{p^s}$.

Now, let us consider the matrices
$$I=\begin{pmatrix} 1 & 0 \\ 0 & 1 \\\end{pmatrix},\quad A=\begin{pmatrix}0 & a \\ 1 & 0 \\\end{pmatrix}, \quad B=\begin{pmatrix} u & -a v \\v& -u \\ \end{pmatrix}.$$
Clearly we have that $A^2=aI$, $B^2=bI$ and $AB=-BA$.

Moreover, if $\alpha,\beta,\gamma,\delta\in\Z{p^s}$ and we solve the linear system of equation associated to
$$x_0 I+ x_1 A + x_2 B + x_3 AB= \begin{pmatrix}\alpha & \beta \\ \gamma & \delta \\ \end{pmatrix}$$
we get that the unique solution is given by:
\begin{align*}
x_0&=\frac{\alpha+\delta}{2},\\x_1&=\frac{\beta+ a\gamma}{2 a},\\ x_2&= \frac{ \alpha\ u - \delta  u + \beta  v  - a \gamma  v }{2b},\\ x_3&= \frac{ -\beta  u + a \gamma  u - a  \alpha  v + a  \delta   v }{-2 a b}.\end{align*}
Consequently, the set $\{I,A,B,AB\}$ is a basis of $\mathbb{M}_2(\Z{p^s})$ and the result follows.
\end{proof}

\section{Quaternions rings over $\Z{n}$ with $n$ a power of two}
\label{SEC:POWT}

The first result of this section shows that, in order to study the structure of $\left(\frac{a,b}{\mathbb{Z}_{2^s}}\right)$, we can restrict ourselves to the cases when $\{a,b\} \subset \{-1,1,3,5\}$.

\begin{Lemma} \label{red}
Let $a,b,a',b'$ be odd integers such that $a \equiv a' \pmod{8}$ and  $b \equiv b' \pmod{8}$. Then,
$$\left(\frac{a,b}{\Z{2^s}}\right) \cong \left(\frac{a',b'}{\Z{2^s}}\right)$$
\end{Lemma}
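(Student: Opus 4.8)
The plan is to establish the isomorphism
$\left(\frac{a,b}{\Z{2^s}}\right) \cong \left(\frac{a',b'}{\Z{2^s}}\right)$
by constructing an explicit $\Z{2^s}$-algebra homomorphism that sends a suitable basis of the first ring to a basis of the second. Denote by $\{1,i,j,k\}$ the standard basis of the source ring, satisfying $i^2=a$, $j^2=b$, $ij=-ji=k$, and by $\{1,i',j',k'\}$ the basis of the target, satisfying $i'^2=a'$, $j'^2=b'$, $i'j'=-j'i'=k'$. The idea is to look for elements $I,J$ inside $\left(\frac{a',b'}{\Z{2^s}}\right)$ of the special form $I=ui'$ and $J=vj'$ with $u,v$ units in $\Z{2^s}$, so that $I^2=u^2a'$, $J^2=v^2b'$, and automatically $IJ=-JI=uvk'$. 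If I can arrange $u^2a'\equiv a$ and $v^2b'\equiv b\pmod{2^s}$, then sending $i\mapsto I$, $j\mapsto J$, $k\mapsto IJ=uvk'$ (and $1\mapsto 1$) respects all the defining multiplication rules, hence extends $\Z{2^s}$-linearly to a ring homomorphism.

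The existence of such units $u,v$ is exactly where Proposition \ref{lem4} enters, and this is the technical heart of the argument. Since $a\equiv a'\pmod 8$ and both are odd, Proposition \ref{lem4} (applied with the roles $a'x^2\equiv a$) guarantees a solution $x=u$ to $a'u^2\equiv a\pmod{2^s}$ for every $s\geq 1$; likewise $b'v^2\equiv b\pmod{2^s}$ has a solution $v$. Moreover any such $u$ must be a unit: if $u$ were even, then $a'u^2$ would be even while $a$ is odd, a contradiction. The same reasoning shows $v$ is a unit. Thus the required $u,v$ exist and are invertible in $\Z{2^s}$.

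It then remains to verify that the homomorphism just built is in fact an isomorphism. Because $u$ and $v$ are units and $\{1,i',j',k'\}$ is a free basis, the images $\{1,\,ui',\,vj',\,uvk'\}$ are again a free $\Z{2^s}$-basis of the target module: the change-of-basis matrix is diagonal with unit entries $1,u,v,uv$, hence invertible over $\Z{2^s}$. Therefore the map carries a basis to a basis, so it is bijective, and being a ring homomorphism it is an algebra isomorphism. I expect no serious obstacle beyond correctly invoking Proposition \ref{lem4} with the hypotheses matched up; the only point demanding a moment of care is confirming that $u,v$ are units (so that the image set is genuinely a basis rather than merely a spanning set), which follows from the parity observation above.
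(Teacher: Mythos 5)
Your proposal is correct and follows essentially the same route as the paper: both invoke Proposition \ref{lem4} to solve $a'x^2\equiv a$ and $b'y^2\equiv b \pmod{2^s}$ and then rescale $i$ and $j$ by the resulting units, the only cosmetic difference being that you build the rescaled basis inside the target ring while the paper builds it inside the source. Your explicit check that $u,v$ are units (via parity) is a small point the paper leaves implicit, but otherwise the arguments coincide.
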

\begin{proof}
Let $\alpha$ be a solution to the congruence $a'x^2\equiv a \pmod{2^s}$ and let $\beta$ be a solution to the congruence $b'y^2\equiv b\pmod{2^s}$ (they exist due to Lemma \ref{lem4}). denote by $\alpha^{-1}$ and $\beta^{-1}$ their inverses modulo $2^s$. Now, considering $i'=\alpha^{-1}i$ and $j'=\beta^{-1}j$ we have that $i'^2=a'$, $j'^2=b'$ and $i'j'=-j'i'$. Since the set $\{1,i',j',i'j'\}$ is obviously a basis of $\left(\frac{a',b'}{\mathbb{Z}_{2^s}}\right)$, the result follows.
\end{proof}

\begin{prop}
Let $a,b$ be odd integers. Then:
\begin{itemize}
\item[i)] $\left(\frac{a,b}{\Z{2^s}}\right) \cong \left(\frac{-1,b}{\Z{2^s}}\right) \cong \left(\frac{-1,a}{\Z{2^s}}\right)$, if $ab \equiv 1 \pmod{8}$.
\item[ii)] $\left(\frac{a,b}{\Z{2^s}}\right) \cong \left(\frac{1,b}{\Z{2^s}}\right) \cong \left(\frac{1,a}{\Z{2^s}}\right)$, if $ab \equiv -1 \pmod{8}$.
\end{itemize}
\end{prop}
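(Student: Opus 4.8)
The plan is to realise each target ring concretely inside $\left(\frac{a,b}{\Z{2^s}}\right)$ by producing two anticommuting elements with the prescribed squares. The general mechanism I would use is this: if $I,J\in\left(\frac{a,b}{\Z{2^s}}\right)$ are units-squaring elements with $I^2=a'$, $J^2=b'$ and $IJ=-JI$, then sending the canonical generators $i',j'$ of $\left(\frac{a',b'}{\Z{2^s}}\right)$ to $I,J$ respects all the defining relations and hence extends to an algebra homomorphism $\left(\frac{a',b'}{\Z{2^s}}\right)\to\left(\frac{a,b}{\Z{2^s}}\right)$. This map is an isomorphism precisely when $\{1,I,J,IJ\}$ is a basis, which I can verify by checking that the determinant of the coordinate matrix of $\{1,I,J,IJ\}$ with respect to $\{1,i,j,k\}$ is a unit in $\Z{2^s}$.

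The key observation is that $k^2=(ij)^2=-ab$ and that $j$ already anticommutes with $k$, since $kj=i j^2=bi$ while $jk=(ji)j=-bi$, so $kj=-jk$. I would therefore keep $J=j$ (so $J^2=b$ is automatic) and take $I=\lambda k$ for a suitable scalar $\lambda$. In case i), where $ab\equiv 1\pmod 8$, I want $I^2=\lambda^2(-ab)=-1$, i.e. $\lambda^2=(ab)^{-1}$; in case ii), where $ab\equiv -1\pmod 8$, I want $I^2=\lambda^2(-ab)=1$, i.e. $\lambda^2=(-ab)^{-1}$. In both situations the unit to be square-rooted is congruent to $1\pmod 8$ exactly because of the hypothesis on $ab$, so the needed $\lambda$ exists for every $s\ge 1$ by Proposition \ref{lem4} applied with unit coefficient $1$. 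This is the step where the number-theoretic hypothesis is used, and I expect it to be the \emph{only} real content of the argument.

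It then remains to confirm that $\{1,\lambda k,\,j,\,\lambda k j\}$ is a basis. Since $\lambda k j=\lambda b i$, the coordinate matrix in the basis $\{1,i,j,k\}$ is (up to permutation) block-antidiagonal, and its determinant equals $\pm\lambda^2 b$, which is $\pm a^{-1}$ by the choice of $\lambda$; in particular it is a unit. Hence the homomorphism above is an isomorphism, giving $\left(\frac{a,b}{\Z{2^s}}\right)\cong\left(\frac{-1,b}{\Z{2^s}}\right)$ in case i) and $\left(\frac{a,b}{\Z{2^s}}\right)\cong\left(\frac{1,b}{\Z{2^s}}\right)$ in case ii).

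For the remaining isomorphism in each part I would invoke the symmetry $\left(\frac{a,b}{\Z{2^s}}\right)\cong\left(\frac{b,a}{\Z{2^s}}\right)$ given by interchanging $i$ and $j$. Applying the isomorphism just constructed to $\left(\frac{b,a}{\Z{2^s}}\right)$—which is legitimate since $ba\equiv ab\pmod 8$—turns the first slot $b$ into $-1$ (respectively $1$), yielding $\left(\frac{a,b}{\Z{2^s}}\right)\cong\left(\frac{-1,a}{\Z{2^s}}\right)$ (respectively $\left(\frac{1,a}{\Z{2^s}}\right)$). The only genuine obstacle is the existence of the square root $\lambda$, and this is resolved cleanly by the hypothesis $ab\equiv\pm1\pmod 8$ together with Proposition \ref{lem4}; the basis verification is then a routine determinant computation.
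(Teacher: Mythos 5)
Your proposal is correct and is in essence the paper's own argument made explicit: the paper deduces the statement from the well-known isomorphism $\left(\frac{a,b}{\Z{2^s}}\right)\cong\left(\frac{-ab,a}{\Z{2^s}}\right)\cong\left(\frac{-ab,b}{\Z{2^s}}\right)$ together with Lemma \ref{red}, and your construction $J=j$, $I=\lambda k$ (using $k^2=-ab$ and $kj=-jk$) is exactly that isomorphism with the mod-$8$ rescaling of Lemma \ref{red} absorbed into the scalar $\lambda$ via Proposition \ref{lem4}. The only difference is cosmetic: you prove the cited fact directly rather than invoking it, and you call Proposition \ref{lem4} instead of routing through Lemma \ref{red}, which makes your version self-contained.
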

\begin{proof}
It is enough to apply the previuos lemma together with the well-known fact that
$$\left(\frac{a,b}{\Z{2^s}}\right) \cong \left(\frac{ -ab,a}{\Z{2^s}}\right) \cong \left(\frac{ -ab,b}{\Z{2^s}}\right).$$
\end{proof}

This result leads to the following isomorphisms.

\begin{cor}
\begin{align*}\left(\frac{ 5,5}{\Z{2^s}}\right) &\cong \left(\frac{ -1,5}{\Z{2^s}}\right),\\ \left(\frac{3,3}{\Z{2^s}}\right) &\cong \left(\frac{-1,3}{\Z{2^s}}\right),\\ \left(\frac{3,5}{\Z{2^s}}\right) &\cong \left(\frac{ 1,3}{\Z{2^s}}\right) \cong \left(\frac{ 1,5}{\Z{2^s}}\right).\end{align*}
\end{cor}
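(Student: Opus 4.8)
The plan is to observe that this corollary is a direct specialization of the preceding proposition, so the only work is to determine, for each of the three displayed isomorphisms, which of the two cases (i) or (ii) applies. This choice is governed entirely by the residue of the product $ab$ modulo $8$, so the argument reduces to a handful of elementary congruence checks.

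First I would treat the two lines involving Hamilton-type algebras. For the pair $a=b=5$ one has $ab=25\equiv 1\pmod{8}$, so case (i) of the proposition applies and yields $\left(\frac{5,5}{\Z{2^s}}\right)\cong\left(\frac{-1,5}{\Z{2^s}}\right)$; since $a=b$ here, the two isomorphic targets furnished by the proposition coincide, so no further identification is needed. In the same way, for $a=b=3$ one has $ab=9\equiv 1\pmod{8}$, so case (i) again applies and delivers $\left(\frac{3,3}{\Z{2^s}}\right)\cong\left(\frac{-1,3}{\Z{2^s}}\right)$, which is the second displayed line.

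Finally, for the mixed pair $a=3$, $b=5$ I would compute $ab=15\equiv -1\pmod{8}$, which places us in case (ii). The proposition then produces both isomorphisms simultaneously, namely $\left(\frac{3,5}{\Z{2^s}}\right)\cong\left(\frac{1,3}{\Z{2^s}}\right)\cong\left(\frac{1,5}{\Z{2^s}}\right)$, matching the third displayed line verbatim (the two target algebras may be listed in either order, since they are linked by a chain of isomorphisms). There is no genuine obstacle in this argument: the entire proof consists of three residue computations modulo $8$ followed by an appeal to the proposition, so no new ring homomorphism or basis change has to be constructed.
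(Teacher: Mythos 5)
Your proposal is correct and matches the paper exactly: the paper states this corollary without proof as an immediate consequence of the preceding proposition, and your three residue checks ($25\equiv 9\equiv 1\pmod{8}$, $15\equiv -1\pmod{8}$) are precisely the implicit justification. No further comment is needed.
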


The following series of propositions describe more isomorphisms.

\begin{prop}
Let $\beta\in\{-1,1,3,5\}$. Then,
$$\left(\frac{1,\beta}{\Z{2^s}}\right) \cong \mathbb{L}(\Z{2^s}).$$
\end{prop}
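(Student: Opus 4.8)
The plan is to keep the generator $i$, which already squares to $1$, and to replace $j$ by a carefully chosen element of the plane spanned by $j$ and $k$. The guiding observation is that the only discrepancy between $\left(\frac{1,\beta}{\Z{2^s}}\right)$ and the target $\mathbb{L}(\Z{2^s})=\left(\frac{1,1}{\Z{2^s}}\right)$ is the square of the second generator; so it suffices to manufacture inside $\left(\frac{1,\beta}{\Z{2^s}}\right)$ a new element $j'$ with $j'^2=1$ that still anticommutes with $i$, and then to check that $\{1,i,j',ij'\}$ remains a basis. The map identifying the standard generators of $\mathbb{L}(\Z{2^s})$ with $i,j'$ will then be the desired isomorphism.

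First I would record the multiplication rules that follow from $i^2=1$ and $j^2=\beta$, namely $ik=j$, $jk=-\beta i$, $kj=\beta i$ and $k^2=-\beta$. From these one checks at once that every element $z=x_2 j + x_3 k$ satisfies $iz=-zi$, so any such $z$ is a legitimate candidate for $j'$. A short computation then gives $z^2=(x_2^2-x_3^2)\beta$, since the cross terms $x_2x_3(jk+kj)$ cancel. Hence imposing $j'^2=1$ reduces to the single congruence $(x_2^2-x_3^2)\beta\equiv 1\pmod{2^s}$, equivalently $x_2^2-x_3^2\equiv\beta^{-1}\pmod{2^s}$.

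The crucial point, and the step I expect to carry the real content, is the solvability of this congruence for all four values $\beta\in\{-1,1,3,5\}$ simultaneously. A scalar multiple of $j$ alone will not work, because $\beta^{-1}$ need not be a square modulo $2^s$ (for instance when $\beta=3$ or $\beta=5$), which is precisely why mixing in $k$ is essential. I would settle it by the elementary fact that every odd integer is a difference of two squares: choosing an odd representative $c$ of $\beta^{-1}$ and setting $x_2=\frac{c+1}{2}$ and $x_3=\frac{c-1}{2}$ (both integers, since $c$ is odd) yields $x_2^2-x_3^2=c\equiv\beta^{-1}\pmod{2^s}$. With $j'=x_2 j+x_3 k$ chosen this way we then have $i^2=j'^2=1$ and $ij'=-j'i$, exactly the relations defining $\mathbb{L}(\Z{2^s})$. (For $\beta=1$ this degenerates to $j'=j$, recovering the trivial case covered also by Lemma \ref{red}.)

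Finally I would confirm that $\{1,i,j',ij'\}$ is a basis of $\left(\frac{1,\beta}{\Z{2^s}}\right)$. Writing $ij'=x_3 j + x_2 k$, the change-of-basis matrix from $\{1,i,j,k\}$ is block diagonal with the $2\times 2$ identity on the $\{1,i\}$ part and $\begin{pmatrix} x_2 & x_3 \\ x_3 & x_2\end{pmatrix}$ on the $\{j,k\}$ part; its determinant is $x_2^2-x_3^2=\beta^{-1}$, a unit modulo $2^s$. Thus the new set is indeed a basis, so $\left(\frac{1,\beta}{\Z{2^s}}\right)$ admits a presentation by two anticommuting square roots of $1$, which is the presentation of $\mathbb{L}(\Z{2^s})$, and the induced map is the sought isomorphism. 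The remaining verifications (the multiplication rules, the cancellation in $z^2$, and the determinant) are all routine once $j'$ has been pinned down.
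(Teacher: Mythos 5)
Your proof is correct, but it runs in the opposite direction from the paper's and avoids its main device. The paper works inside the explicit matrix model of $\mathbb{L}(\Z{2^s})$ over $\Z{2^s}[i]/\langle i^2+1\rangle$: it picks integers $\eta,\theta$ with $-\eta^2+\theta^2=\beta$ (found by inspection for each of the four values of $\beta$), exhibits matrices $A,B$ in that model with $A^2=I$, $B^2=\beta I$, $AB=-BA$, and then checks that $\{I,A,B,AB\}$ is a basis of $\mathbb{L}(\Z{2^s})$ by solving the associated linear system, whose solution has denominator $\eta^2+\theta^2$ --- which must be, and for the paper's choices is, odd and hence a unit. You instead work intrinsically inside $\left(\frac{1,\beta}{\Z{2^s}}\right)$, keeping $i$ and manufacturing $j'=x_2j+x_3k$ with $j'^2=1$ and $ij'=-j'i$, so that the whole problem collapses to the single congruence $x_2^2-x_3^2\equiv\beta^{-1}\pmod{2^s}$, dispatched by the identity $c=\left(\frac{c+1}{2}\right)^2-\left(\frac{c-1}{2}\right)^2$ for odd $c$; the unit determinant $x_2^2-x_3^2\equiv\beta^{-1}$ of your change of basis then comes for free rather than needing to be checked. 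Amusingly, both arguments ultimately rest on the same arithmetic fact --- an odd residue is a difference of two squares --- but yours is the more uniform execution: it requires no matrix representation, no case-by-case choice of $(\eta,\theta)$, and it proves the statement verbatim for \emph{every} odd unit $\beta$, not merely $\beta\in\{-1,1,3,5\}$, which together with Lemma \ref{red} it renders superfluous for this proposition. What the paper's route buys in exchange is the concrete matrix realization it advertises as computationally useful. The verifications you defer (the multiplication rules, the well-definedness of the algebra map determined by sending the generators of $\mathbb{L}(\Z{2^s})$ to $i$ and $j'$, and bijectivity from the basis-to-basis property) are genuinely routine and are left at exactly the level of detail the paper itself adopts in its analogous proofs, such as that of $\left(\frac{-1,5}{\Z{2^s}}\right)\cong\mathbb{L}(\Z{2^s})$.
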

\begin{proof}
Given $\beta\in\{-1,1,3,5\}$, there exist integer $\eta,\theta$ such that $-\eta^2 + \theta^2 = \beta$. Let us consider matrices
$$I=\begin{pmatrix}1 & 0 \\0 & 1 \end{pmatrix},\quad A=\begin{pmatrix} 0 & i \\-i & 0 \end{pmatrix},\quad  B=\begin{pmatrix}\eta i & \theta \\\theta & -\eta i \end{pmatrix}.$$
Clearly we have that $A^2= I$, $B^2=\beta I$ and $AB=-BA$.

Moreover, the linear system of equations associated to
$$\begin{pmatrix}\alpha-\beta i & \gamma +\delta i \\\gamma -\delta i & \alpha+\beta i\end{pmatrix}= x I + y A + z B + t AB$$
has the following unique solution:
\begin{align*} x&=\alpha,\\ y&=\delta,\\ z&=  \frac{ \beta\,\eta   + \gamma\,\theta}{\eta^2 + \theta^2},\\ t&=  \frac{ \gamma\,\eta   - \beta\,\theta}{\eta^2 + \theta^2}.\end{align*}
Hence, the set $\{I,A,B,AB\}$ is a basis of $ \mathbb{L}(\Z{2^s})$ and the result follows.
\end{proof}

\begin{prop}
$$\left(\frac{-1,5}{\Z{2^s}}\right)\cong \mathbb{L}(\Z{2^s}).$$
\end{prop}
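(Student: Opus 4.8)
The plan is to realize $\left(\frac{-1,5}{\Z{2^s}}\right)$ concretely inside $\mathbb{L}(\Z{2^s})$ by producing two elements that satisfy the defining relations of the source ring and that simultaneously form a module basis of the target. Write $\{1,\mathbf i,\mathbf j,\mathbf k\}$ for the standard basis of $\mathbb{L}(\Z{2^s})=\left(\frac{1,1}{\Z{2^s}}\right)$, so that $\mathbf i^2=\mathbf j^2=1$ and $\mathbf k=\mathbf i\mathbf j=-\mathbf j\mathbf i$. A direct computation gives $\mathbf k^2=\mathbf i\mathbf j\mathbf i\mathbf j=-\mathbf i^2\mathbf j^2=-1$, and $\mathbf k$ anticommutes with both $\mathbf i$ and $\mathbf j$ (indeed $\mathbf k\mathbf i=-\mathbf j$ and $\mathbf k\mathbf j=\mathbf i$). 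Thus $\mathbb{L}(\Z{2^s})$ already contains a natural square root of $-1$.

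It is worth stressing why a direct construction is unavoidable here: for $(a,b)=(-1,5)$ one has $ab\equiv 3\pmod 8$, so neither branch of the reduction proposition (the cases $ab\equiv\pm1\pmod 8$) applies, and the moves $\left(\frac{a,b}{\Z{2^s}}\right)\cong\left(\frac{-ab,a}{\Z{2^s}}\right)\cong\left(\frac{-ab,b}{\Z{2^s}}\right)$ only permute $(-1,5)$ among $(5,-1)$ and $(5,5)$, never reaching a pair containing a $1$. So this case cannot be folded into the families already treated. I would therefore set $A=\mathbf k$ and $B=\mathbf i+2\mathbf j$. Then $A^2=-1$, while $B^2=\mathbf i^2+2(\mathbf i\mathbf j+\mathbf j\mathbf i)+4\mathbf j^2=1+4=5$, and $AB=-BA$ since $\mathbf k$ anticommutes with $\mathbf i$ and $\mathbf j$. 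Note that the only arithmetic input is the trivial identity $5=1^2+2^2$, so the more delicate Proposition \ref{lem10} is not even required for this particular isomorphism.

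By the universal property of the quaternion ring (its multiplication being completely determined by the images of the generators subject to $i^2=-1$, $j^2=5$, $ij=-ji$), there is a unique $\Z{2^s}$-algebra homomorphism
$$\phi:\left(\frac{-1,5}{\Z{2^s}}\right)\longrightarrow \mathbb{L}(\Z{2^s}),\qquad \phi(i)=A,\ \phi(j)=B,$$
and necessarily $\phi(k)=AB=\mathbf k(\mathbf i+2\mathbf j)=2\mathbf i-\mathbf j$. To conclude I would verify that $\phi$ is bijective by checking that $\{1,A,B,AB\}$ is a basis of $\mathbb{L}(\Z{2^s})$. Expressing these four elements in the basis $\{1,\mathbf i,\mathbf j,\mathbf k\}$, the change-of-basis matrix is
$$\begin{pmatrix} 1 & 0 & 0 & 0 \\ 0 & 0 & 0 & 1 \\ 0 & 1 & 2 & 0 \\ 0 & 2 & -1 & 0 \end{pmatrix},$$
whose determinant equals $-5$, an odd integer and hence a unit in $\Z{2^s}$ for every $s\geq 1$. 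Therefore $\phi$ carries a basis to a basis and is an isomorphism.

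The only genuinely creative step is the choice of $B=\mathbf i+2\mathbf j$: once it is spotted, every relation and the basis check are immediate. Consequently I expect no real obstacle, the verification of the basis condition (i.e.\ that the determinant is a unit) being the last routine point; it succeeds precisely because $2\nmid 5$, which is exactly the reason this pair does not reduce to the $\left(\frac{1,\beta}{\Z{2^s}}\right)$ family. One could instead mirror the matrix-model computations of the preceding propositions by exhibiting suitable matrices in the explicit image of $\mathbb{L}(\Z{2^s})$, but the intrinsic argument above is shorter and avoids any clash between the two meanings of ``$i$''.
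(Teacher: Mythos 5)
Your proof is correct, and it takes a genuinely different (and leaner) route than the paper's. The paper proves this proposition by sending $\left(\frac{-1,5}{\Z{2^s}}\right)$ into $\left(\frac{-1,1}{\Z{2^s}}\right)$, fixing $i$ and rotating in the $(j,k)$-plane by a pair $(\eta,\theta)$ solving $5x^2+5y^2\equiv 1\pmod{2^s}$ --- precisely what Proposition \ref{lem10}, the most delicate number-theoretic ingredient of Section \ref{SEC:NUM}, is there to supply --- and then relies implicitly on $\left(\frac{-1,1}{\Z{2^s}}\right)\cong\mathbb{L}(\Z{2^s})$ from the preceding proposition. You instead exhibit the pair $A=\mathbf{k}$, $B=\mathbf{i}+2\mathbf{j}$ directly inside $\mathbb{L}(\Z{2^s})$, exploiting that $\mathbf{k}^2=-1$ there and that $5=1^2+2^2$, and you certify bijectivity cleanly via the change-of-basis determinant $-5$, a unit modulo $2^s$; your diagnosis that $ab\equiv 3\pmod 8$ puts this case outside the reach of the reduction moves is also accurate. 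What your argument buys: it bypasses Proposition \ref{lem10} entirely, avoids the intermediate ring $\left(\frac{-1,1}{\Z{2^s}}\right)$, and is uniform in $s$. In fact your element $B$ quietly repairs a slip in the paper's own write-up: as literally displayed, the paper's map $\phi(j')=\eta j+\theta k$ requires $(\eta j+\theta k)^2=\eta^2+\theta^2$ to equal $5$, i.e.\ a solution of $x^2+y^2\equiv 5\pmod{2^s}$ (your $(1,2)$ does this), whereas the quoted lemma yields $\eta^2+\theta^2\equiv 5^{-1}$, which is what the map in the \emph{opposite} direction would need; either reading gives a correct proof, but yours needs only the trivial identity. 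Your appeal to the universal property of the presentation (the relations $A^2=-1$, $B^2=5$, $AB=-BA$ determine the homomorphism) is at the same level of rigor as the paper's repeated ``define an algebra homomorphism by\ldots'' steps, so there is no gap.
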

\begin{proof}
Let $\{1,i,j,k\}$ and $\{1',i',j',k'\}$ be the canonical basis of $\left(\frac{ -1,1}{\Z{2^s}}\right)$ and $\left(\frac{ -1,5}{\Z{2^s}}\right)$, respectively. Given $(\eta,\theta)$ a solution to $5x^2 + 5y^2\equiv 1\pmod{2^s}$ (it exists due to Lemma \ref{lem10}), we can define a linear transformation $\phi:\left(\frac{ -1,5}{\Z{2^s}}\right)\longrightarrow \left(\frac{ -1,1}{\Z{2^s}}\right)$ by:
$$\phi(1')=1,\quad \phi(i')=i,\quad \phi(j')=\eta j+\theta k,\quad \phi(k')= \eta k-\theta j.$$

It is easily seen that $\phi$ is in fact a well-defined algebra homomorphism, and since the set $\{1,i,\eta j+\theta k,\eta k-\theta j\}$ is linearly independent, the proof is complete.
\end{proof}

\begin{prop}
$$\left(\frac{-1,3}{\Z{2^s}}\right) \cong \mathbb{H}(\Z{2^s}).$$
\end{prop}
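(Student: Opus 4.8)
The plan is to imitate the explicit constructions of the preceding propositions and exhibit an algebra isomorphism $\phi\colon\left(\frac{-1,3}{\Z{2^s}}\right)\longrightarrow\mathbb{H}(\Z{2^s})=\left(\frac{-1,-1}{\Z{2^s}}\right)$. Write $\{1,i,j,k\}$ for the canonical basis of $\mathbb{H}(\Z{2^s})$, so that $i^2=j^2=k^2=-1$ and $jk=-kj=i$, and $\{1',i',j',k'\}$ for that of $\left(\frac{-1,3}{\Z{2^s}}\right)$, so that $i'^2=-1$ and $j'^2=3$. I would set $\phi(i')=i$ and look for $\phi(j')$ among the ``pure'' quaternions orthogonal to $i$, i.e.\ in the span of $j$ and $k$, since these are exactly the elements anticommuting with $i$. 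Putting $\phi(j')=\eta j+\theta k$ one computes $(\eta j+\theta k)^2=-(\eta^2+\theta^2)$ and $i(\eta j+\theta k)=-(\eta j+\theta k)i$, so the relations $\phi(j')^2=3$ and $\phi(i')\phi(j')=-\phi(j')\phi(i')$ hold precisely when $\eta^2+\theta^2\equiv-3\pmod{2^s}$.

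Granting such $\eta,\theta$, the rest is routine. Define $\phi(k')=\phi(i')\phi(j')=\eta k-\theta j$ and extend $\phi$ linearly. Since the images $i$ and $\eta j+\theta k$ satisfy the defining relations $i^2=-1$, $(\eta j+\theta k)^2=3$ and $i(\eta j+\theta k)+(\eta j+\theta k)i=0$ of $\left(\frac{-1,3}{\Z{2^s}}\right)$, the map $\phi$ is a well-defined algebra homomorphism. It is an isomorphism because it carries the basis $\{1',i',j',k'\}$ to $\{1,i,\,\eta j+\theta k,\,\eta k-\theta j\}$, and the determinant of the associated change-of-basis matrix equals $\eta^2+\theta^2\equiv-3\pmod{2^s}$, which is a unit in $\Z{2^s}$.

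The only genuine obstacle is therefore the solvability of $x^2+y^2\equiv-3\pmod{2^s}$ for every $s\geq1$; note that the two-variable Hensel lifting of Lemma \ref{l1} is unavailable for $p=2$, which is precisely why Proposition \ref{lem10} had to be proved by hand. I would reduce the present congruence to that one through the factorization $-3\equiv 5^{-1}\cdot(-15)\pmod{2^s}$. By Proposition \ref{lem10} there exist $\eta_0,\theta_0$ with $\eta_0^2+\theta_0^2\equiv 5^{-1}\pmod{2^s}$, and since $-15\equiv1\pmod 8$, Proposition \ref{lem4} (applied with $a=1$ and $b=-15$) yields $u$ with $u^2\equiv-15\pmod{2^s}$. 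Then $(\eta,\theta)=(u\eta_0,u\theta_0)$ satisfies $\eta^2+\theta^2\equiv u^2\,5^{-1}\equiv(-15)\,5^{-1}\equiv-3\pmod{2^s}$, as required. Because both invoked propositions hold for all $s\geq1$, the argument is uniform in $s$ and no separate treatment of small exponents is needed.
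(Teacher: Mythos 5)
Your proof is correct, and it follows the same explicit change-of-basis template as the paper (send $i'\mapsto i$, $j'\mapsto \eta j+\theta k$, $k'\mapsto \eta k-\theta j$, with bijectivity from the unit determinant $\eta^2+\theta^2$), but you reach the key congruence by a different route. The paper's proof is two lines: it invokes Lemma \ref{red} with $3\equiv -5\pmod 8$ to replace $\left(\frac{-1,3}{\Z{2^s}}\right)$ by $\left(\frac{-1,-5}{\Z{2^s}}\right)$, after which the congruence needed for the embedding into $\mathbb{H}(\Z{2^s})$ is $\eta^2+\theta^2\equiv 5\pmod{2^s}$, solved exactly by $(\eta,\theta)=(1,2)$; so after that normalization no number-theoretic input beyond Lemma \ref{red} (i.e.\ Proposition \ref{lem4}) is needed. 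You instead keep $b=3$ and must solve $x^2+y^2\equiv -3\pmod{2^s}$, which you do via the factorization $-3\equiv(-15)\cdot 5^{-1}$, combining Proposition \ref{lem10} (writing $5^{-1}$ as a sum of two squares mod $2^s$) with Proposition \ref{lem4} (a root $u$ of $u^2\equiv -15$, legitimate since $-15\equiv 1\pmod 8$). This is correct and uniform in $s$, at the cost of invoking the harder Proposition \ref{lem10}, which the paper reserves for the companion case $\left(\frac{-1,5}{\Z{2^s}}\right)$; in effect your scaling by $u$ re-proves the instance of Lemma \ref{red} that the paper quotes, so the two arguments use the same ingredients packaged differently, with the paper's normalization buying a trivially solvable congruence and your version buying independence from Lemma \ref{red}. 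One cosmetic slip: over $\Z{2^s}$ the elements anticommuting with $i$ are \emph{not} exactly the span of $j$ and $k$ (for instance $2^{s-1}$ anticommutes with $i$, since $2\cdot 2^{s-1}\equiv 0$), but you only use the sufficient direction of that heuristic, so the proof is unaffected.
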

\begin{proof}
Note that, due to Lemma \ref{red} $\left(\frac{-1,3}{\Z{2^s}}\right)\cong \left(\frac{-1,-5}{\Z{2^s}}\right)$. Hence we can proceed in a similar way as in the previous proposition.
\end{proof}

All the previous results can be summarized in the following theorem.

\begin{Theorem}
$$\left(\frac{a,b}{\Z{2^s}}\right) \cong \begin{cases}\mathbb{H}(\Z{2^s}), & \textrm{if $a \equiv b\equiv -1 \pmod{4}$ }; \\ \mathbb{L}(\Z{2^s}), & \textrm{otherwise}.\end{cases} $$
\end{Theorem}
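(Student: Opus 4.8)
The plan is to reduce the classification to the handful of cases already settled in the preceding propositions and then assemble everything by a finite, completely routine, case analysis. First I would invoke Lemma \ref{red}: since $a$ and $b$ are units in $\Z{2^s}$ they are odd, and every odd integer is congruent modulo $8$ to exactly one element of $\{-1,1,3,5\}$ (these are the four odd residues $7,1,3,5$ with $7\equiv -1$). Replacing $a$ and $b$ by these representatives changes neither the isomorphism type of $\left(\frac{a,b}{\Z{2^s}}\right)$ (by Lemma \ref{red}) nor the residues of $a,b$ modulo $4$ (since congruence modulo $8$ refines congruence modulo $4$). Hence we may assume $a,b\in\{-1,1,3,5\}$. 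I would also record the obvious symmetry $\left(\frac{a,b}{\Z{2^s}}\right)\cong\left(\frac{b,a}{\Z{2^s}}\right)$, induced by interchanging $i$ and $j$, which lets me treat $\{a,b\}$ as an unordered pair.

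Next I would translate the hypothesis into the language of the representatives. Among the elements of $\{-1,1,3,5\}$ one has $a\equiv -1\pmod 4$ precisely when $a\in\{-1,3\}$, and $a\equiv 1\pmod 4$ precisely when $a\in\{1,5\}$. Therefore the condition $a\equiv b\equiv -1\pmod 4$ is equivalent to $\{a,b\}\subseteq\{-1,3\}$, whereas the ``otherwise'' case means that at least one of $a,b$ lies in $\{1,5\}$.

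It then remains to dispose of the ten unordered pairs drawn from $\{-1,1,3,5\}$ by citing the appropriate earlier result. The pairs contained in $\{-1,3\}$ are $\{-1,-1\}$, $\{-1,3\}$ and $\{3,3\}$: the first is $\mathbb{H}(\Z{2^s})$ by definition, the second is $\mathbb{H}(\Z{2^s})$ by the Proposition treating $\left(\frac{-1,3}{\Z{2^s}}\right)$, and the third reduces to the second via the Corollary, which gives $\left(\frac{3,3}{\Z{2^s}}\right)\cong\left(\frac{-1,3}{\Z{2^s}}\right)$. Every remaining pair contains an element of $\{1,5\}$: any pair containing $1$ is isomorphic to $\mathbb{L}(\Z{2^s})$ by the Proposition asserting $\left(\frac{1,\beta}{\Z{2^s}}\right)\cong\mathbb{L}(\Z{2^s})$ for $\beta\in\{-1,1,3,5\}$ (after applying the symmetry if necessary); the pair $\{-1,5\}$ is $\mathbb{L}(\Z{2^s})$ by the corresponding Proposition; and the pairs $\{3,5\}$ and $\{5,5\}$ reduce to $\mathbb{L}(\Z{2^s})$ through the Corollary. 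This exhausts all possibilities and matches the stated dichotomy exactly.

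The computation above presents no genuine obstacle, since every analytic step — solving the quadratic congruences modulo $2^s$ and exhibiting explicit bases realizing the isomorphisms — has already been carried out in the cited lemmas and propositions. The only real care needed here is to verify that the case analysis is complete and that the split $\{-1,3\}$ versus $\{1,5\}$ faithfully records the condition modulo $4$; both are immediate once the translation in the second paragraph is in hand. I would finally remark that, for $s\geq 2$, the two cases are genuinely distinct, this being precisely the content of the earlier results establishing $\mathbb{H}(\Z{2^s})\not\cong\mathbb{L}(\Z{2^s})$.
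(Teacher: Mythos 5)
Your proposal is correct and takes essentially the same approach as the paper: the paper offers no separate argument for this theorem beyond the remark that it summarizes all the previous results, and your reduction via Lemma \ref{red} to representatives in $\{-1,1,3,5\}$ followed by the unordered-pair case analysis through the Corollary and the three preceding propositions is exactly that assembly, only spelled out. Your explicit verification that the $\{-1,3\}$ versus $\{1,5\}$ split encodes the condition modulo $4$, and your closing remark that the two cases are genuinely distinct for $s\geq 2$, make your write-up if anything more complete than the paper's.
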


\section{Conclusions}

In this short final section we present the main result of the paper, which collects all our previous work. It describes the structure of quaternion rings over $\Z{n}$.

\begin{Theorem}
Sea $n$ be an integer and let $a,b$ be such that $\gcd(a,n)=\gcd(b,n)=1$. The following hold:
\begin{itemize}
\item[i)] If $n$ is odd, then
$$\left(\frac{a,b}{\Z{n}} \right) \cong \mathbb{M}_2(\Z{n}).$$
\item[ii)] If $n=2^sm$ with $s>0$ and $m$ odd, then
$$\left(\frac{a,b}{\Z{n}}\right) \cong \begin{cases}\mathbb{M}_2(\Z{m}) \times (\frac{-1,-1}{\Z{2^s}}), & \textrm{if $s=1$ or $a \equiv b\equiv -1 \pmod{4}$}; \\ \mathbb{M}_2(\Z{m}) \times (\frac{1,1}{\Z{2^s}}), & \textrm{otherwise}.\end{cases}$$
\end{itemize}
\end{Theorem}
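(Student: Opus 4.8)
The plan is to reduce the whole statement to the prime-power results already proved, gluing them together with the Chinese Remainder decomposition (\ref{F1}). First I would record that the coprimality hypotheses $\gcd(a,n)=\gcd(b,n)=1$ guarantee that $a,b$ are units in each local factor $\Z{p^r}$ (and in particular odd in the two-part), so the hypotheses of the earlier prime-power theorems are automatically satisfied at every prime dividing $n$.

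For part (i), I would write $n=p_1^{r_1}\cdots p_k^{r_k}$ with all $p_i$ odd, apply (\ref{F1}) to obtain $\left(\frac{a,b}{\Z{n}}\right)\cong\bigoplus_i\left(\frac{a,b}{\Z{p_i^{r_i}}}\right)$, and then invoke the odd prime-power theorem to replace each summand by $\mathbb{M}_2(\Z{p_i^{r_i}})$. The final step is to recognise $\bigoplus_i\mathbb{M}_2(\Z{p_i^{r_i}})\cong\mathbb{M}_2\!\left(\bigoplus_i\Z{p_i^{r_i}}\right)\cong\mathbb{M}_2(\Z{n})$, the first isomorphism being the elementary fact that matrix rings commute with finite direct products and the second being (\ref{l3}).

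For part (ii), I would split $n=2^s m$ and factor $m$ into odd prime powers, so that (\ref{F1}) yields
\begin{equation*}
\left(\frac{a,b}{\Z{n}}\right)\cong\left(\frac{a,b}{\Z{2^s}}\right)\oplus\bigoplus_i\left(\frac{a,b}{\Z{p_i^{r_i}}}\right).
\end{equation*}
The odd factors collapse to $\mathbb{M}_2(\Z{m})$ exactly as in part (i), while the power-of-two classification theorem identifies the two-part with $\mathbb{H}(\Z{2^s})=\left(\frac{-1,-1}{\Z{2^s}}\right)$ when $a\equiv b\equiv-1\pmod 4$ and with $\mathbb{L}(\Z{2^s})=\left(\frac{1,1}{\Z{2^s}}\right)$ otherwise. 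Assembling the two descriptions gives the stated dichotomy.

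The only point that needs care is the boundary case $s=1$. Here the residues of $a$ and $b$ no longer decide the answer, because $\mathbb{H}(\Z{2})\cong\mathbb{L}(\Z{2})$, so the two-part coincides with both $\left(\frac{-1,-1}{\Z{2}}\right)$ and $\left(\frac{1,1}{\Z{2}}\right)$; this is exactly why the statement lumps $s=1$ together with the Hamilton branch. Apart from this bookkeeping I expect no real obstacle: the entire content of the theorem is the assembly of the odd prime-power theorem, the power-of-two classification, and two applications of the Chinese Remainder Theorem---one to decompose the quaternion ring and one to recombine the matrix factors.
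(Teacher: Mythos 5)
Your proposal is correct and matches the paper's intent exactly: the paper presents this theorem as a summary ``collecting all previous work,'' i.e.\ precisely your assembly of the decomposition (\ref{F1}), the odd prime-power splitting theorem, the power-of-two classification, and the recombination $\mathbb{M}_2(\Z{p_1^{r_1}})\oplus\cdots\oplus\mathbb{M}_2(\Z{p_k^{r_k}})\cong\mathbb{M}_2(\Z{m})$. Your explicit remark on the boundary case $s=1$ (where $\mathbb{H}(\Z{2})\cong\mathbb{L}(\Z{2})$ makes both branches agree) is a point the paper leaves implicit, and it is handled correctly.
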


We can restate the result in the following terms.

\begin{Theorem}
$$\left(\frac{a,b}{\Z{n}} \right) \cong \begin{cases} \mathbb{H}(\Z{n}), & \textrm{if $a \equiv b\equiv -1 \pmod{4}$ }; \\ \mathbb{L}(\Z{n}), & \textrm{otherwise}.\end{cases} $$
\end{Theorem}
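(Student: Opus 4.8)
The plan is to reduce to the prime-power case via the Chinese Remainder Theorem and then glue together the classifications already obtained in Sections \ref{SEC:HAM}, \ref{SEC:ODD} and \ref{SEC:POWT}. Write $n = 2^s m$ with $m$ odd and $s \geq 0$. Since $\mathbb{H}(\Z{n}) = \left(\frac{-1,-1}{\Z{n}}\right)$ and $\mathbb{L}(\Z{n}) = \left(\frac{1,1}{\Z{n}}\right)$, the natural isomorphism (\ref{F1}) applies to each of $\left(\frac{a,b}{\Z{n}}\right)$, $\mathbb{H}(\Z{n})$ and $\mathbb{L}(\Z{n})$, decomposing all three as products over the prime powers dividing $n$. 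It therefore suffices to match the corresponding factors prime power by prime power.

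Over the odd part the three candidate rings collapse to one: by the Theorem of Section \ref{SEC:ODD}, every quaternion ring over $\Z{p^r}$ with $p$ odd is isomorphic to $\mathbb{M}_2(\Z{p^r})$, and by the Corollary of Section \ref{SEC:HAM} so are $\mathbb{H}(\Z{p^r})$ and $\mathbb{L}(\Z{p^r})$. Hence, for every odd prime power $p^r \mid m$,
$$\left(\frac{a,b}{\Z{p^r}}\right) \cong \mathbb{H}(\Z{p^r}) \cong \mathbb{L}(\Z{p^r}),$$
so each odd factor may be read either as a Hamilton factor or as an $\mathbb{L}$-factor, as convenient. Over the $2$-part I would invoke the Theorem of Section \ref{SEC:POWT}, which gives $\left(\frac{a,b}{\Z{2^s}}\right) \cong \mathbb{H}(\Z{2^s})$ when $a \equiv b \equiv -1 \pmod 4$ and $\left(\frac{a,b}{\Z{2^s}}\right) \cong \mathbb{L}(\Z{2^s})$ otherwise.

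Reassembling is then immediate. If $a \equiv b \equiv -1 \pmod 4$, the $2$-part is $\mathbb{H}(\Z{2^s})$ and each odd factor can be taken to be $\mathbb{H}(\Z{p^r})$, so (\ref{F1}) applied to $\mathbb{H}$ yields $\left(\frac{a,b}{\Z{n}}\right) \cong \mathbb{H}(\Z{n})$; otherwise the $2$-part is $\mathbb{L}(\Z{2^s})$ and each odd factor can be taken to be $\mathbb{L}(\Z{p^r})$, giving $\left(\frac{a,b}{\Z{n}}\right) \cong \mathbb{L}(\Z{n})$.

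I expect the main obstacle to be purely bookkeeping, concentrated in the degenerate cases, which the collapse on the odd part is precisely designed to absorb. When $n$ is odd, the residues $a,b \bmod 4$ are not even determined by $a,b \bmod n$, and when $s=1$ the Section \ref{SEC:POWT} classification cannot separate $\mathbb{H}(\Z{2})$ from $\mathbb{L}(\Z{2})$; but in both situations $\mathbb{H}(\Z{n}) \cong \mathbb{L}(\Z{n})$, so the statement is insensitive to the choice of integer representatives and the two branches of the case distinction agree. Only when $4 \mid n$ is the condition $a \equiv b \equiv -1 \pmod 4$ well defined on residues modulo $n$, and there it genuinely separates the two isomorphism types, in exact agreement with the assembly above.
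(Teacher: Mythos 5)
Your proposal is correct and matches the paper's (implicit) argument exactly: the paper presents this theorem as a direct assembly, via the decomposition (\ref{F1}), of the odd prime-power splitting result of Section \ref{SEC:ODD}, the isomorphisms $\mathbb{H}(\Z{p^r})\cong\mathbb{L}(\Z{p^r})\cong\mathbb{M}_2(\Z{p^r})$ from Section \ref{SEC:HAM}, and the classification over $\Z{2^s}$ from Section \ref{SEC:POWT}. Your explicit treatment of the degenerate cases ($n$ odd and $s=1$, where the condition modulo $4$ is not determined by the residues modulo $n$ but $\mathbb{H}\cong\mathbb{L}$ makes the statement insensitive to this) is a point the paper leaves tacit, and it is handled correctly.
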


In conclusion, quaternion algebras over the ring $\mathbb{Z}/n\mathbb{Z} $ split; i.e., are isomorphic to the matrix ring $\mathbb{M}_2(\Z{n})$ if and only if $n$ is odd. Moreover, for a given $n$, there are at most two isomorphism classes of quaternion algebras over $\Z{n}$ and there is only one isomorphism class if and only if $n \not \equiv 0 \pmod{4}$.

\section*{Acknowledgement}

We thank Alberto Elduque for useful comments and remarks.

\end{document}